\documentclass[oneside,11pt]{amsart}

\headsep 23pt
\footskip 35pt
\hoffset -4truemm
\voffset 12.5truemm
\newskip\stdskip                      
\stdskip=6.6pt plus3.3pt minus3.3pt    
\setlength{\textheight}{7.5in}          
\setlength{\textwidth}{5.2in}         
\flushbottom                           
\setlength{\parindent}{0pt}            
\setlength{\parskip}{\stdskip}
\setlength{\medskipamount}{\stdskip}
\setlength{\mathsurround}{0.8pt}     
\setlength{\labelsep}{0.75em} 
\usepackage{pinlabel}  
\usepackage{pictexwd,dcpic}

\usepackage{amscd,latexsym,amssymb}
\usepackage{bm}

  \usepackage{hyperref}  
  \hypersetup{%
  bookmarksnumbered=true,%
  bookmarks=true,%
  colorlinks=true,%
  linkcolor=blue,%
  citecolor=blue,%
  filecolor=blue,%
  menucolor=blue,%
  pagecolor=blue,%
  urlcolor=blue,%
  pdfnewwindow=true,%
  pdfstartview=FitBH}

\newtheorem{thm}{Theorem}[section]
\newtheorem{mthm}{Theorem}
\newtheorem{cor}[thm]{Corollary}
\newtheorem{lem}[thm]{Lemma}
\newtheorem{prop}[thm]{Proposition}

\theoremstyle{definition}
\newtheorem{defin}[thm]{Definition}

\theoremstyle{definition}

\newtheorem{exm}[thm]{Example}

\newtheorem{remark}[thm]{Remark}

\theoremstyle{remark}

\def\co{\colon\thinspace}

\begin{document}
\title{Norm minima in certain Siegel leaves}
\author{Li Cai}
\address{School of Mathematics and Systems Science, 
Chinese Academy of Sciences, Beijing 100190, China}
\email{l-cai@amss.ac.cn}

\subjclass[2010]{Primary 57R30; Secondary 57R70, 05E45}

\keywords{foliation, moment-angle manifold, 
simplicial complex}

\begin{abstract}    
	In this paper we shall illustrate that each polytopal moment-angle complex
	can be understood as the intersection of the minima of corresponding Siegel 
	leaves and the unit sphere, with respect to the maximum norm. Consequently, 
	an alternative proof of a rigidity theorem of Bosio and Meersseman is obtained; 
	as piecewise linear manifolds, polytopal real moment-angle complexes can 
	be smoothed in a natural way. 
\end{abstract}

\maketitle

\section{Introduction}
An \emph{admissible configuration} of $m$ complex vectors in 
$\mathbb{C}^{d/2}$ ($m>d$ with $d$ even)
satisfying so called \emph{Siegel} and \emph{weak hyperbolicity} 
conditions (cf.~\cite[p.~82]{Mee00}; see Section \ref{sec:nota}
for a real analogue), 
gives rise to a free action on $\mathbb{C}^m$ via 
exponential functions. There are two types of 
leaves in the holomorphic foliation 
given by this action: a leaf is of 
\emph{Siegel type} if the origin is not 
in its closure, otherwise it is said of Poincar\'{e} type. 

These objects originated in the work \cite{CKP78} 
of C.~Camacho, N.~Kuiper and J.~Palls 
on the complex analogue of a 
dynamical system for which the real version appeared in 
an earlier work of Poincar\'{e},
and later have been developed and generalized by S.~L\'{o}pez de Medrano, 
A.~Verjovsky and L.~Meersseman (cf.~\cite{LV97}, \cite{Mee00}).  
From their works, 
the projectivization of the minima of 
all Siegel leaves, with respect to the Euclidean norm, 
can be endowed with the structure of a 
compact, complex $(m-d/2-1)$-manifold $C^{\infty}$-embedded in $\mathbb{C}P^{m-1}$, 
which is not symplectic except the trivial case.
This class of complex manifolds is now named as \emph{$LVM$ manifolds}.

On the other hand, with a direct calculation, 
the space of minima of all Siegel leaves can 
be described by $d$ \emph{real quadrics} 
arising from the given configuration in $\mathbb{R}^d$, 
whose intersection with unit Euclidean
sphere in $\mathbb{C}^m$ is transverse hence
is a smooth manifold  of real dimension $2m-d-1$. F.~Bosio and L.~Meersseman 
observed that this method also works for odd $d$, and call these 
manifolds embedded in spheres as \emph{links} in \cite{BM06}.

This special class of links is a model
for \emph{polytopal moment-angle manifolds}.
In general their topology is known to be 
complicated (cf.~\cite{BM06}, \cite{GL13}), for instance,
arbitrary \emph{torsion} can appear in the cohomology, 
as well as non-vanishing triple \emph{Massey products} (cf.~\cite{Bas03}, \cite{DS07});
 in the case $d=2$, 
the classification work \cite{LdM89} by S.~L\'{o}pez de Medrano 
shows that they are diffeomorphic to a triple product of spheres 
or to the connected sum of sphere products. 
An important way to understand them is that  
they inherit the natural $(S^1)^{m}$-action on $\mathbb{C}^m$, 
with each quotient space homeomorphic (as manifolds with corners) 
to a simple convex polytope.
Via the \emph{basic construction} originating 
from \emph{reflection group theory} and then generalized 
by M.~W.~Davis and T.~Januszkiewicz 
in their influential work \cite{DJ91}, 
each link discussed above is homeomorphic to a \emph{moment-angle complex}
(named in \cite{BP02}), 
i.e.~a \emph{polyhedral product} 
with pairs $(D^2,S^1)$ corresponding to the boundary complex of a simplicial
polytope. 

 The polyhedral product model has been studied in detail and generalized by 
 V.~Buchstaber and T.~Panov in \cite{BP02}. Later a more categorical treatment by 
 A.~Bahri, M.~Bendersky, F.~R.~Cohen and S.~Gitler in their work 
 \cite{BBCG10} provides a penetrating viewpoint from homotopy theory. 
 
 These spaces have spawned a large body of the work notably
 with Davis-Januszkiewicz \cite{DJ91} on quasi-toric varieties, 
 Buchstaber-Panov \cite{BP02} on moment-angle complexes, Goresky-MacPherson 
 \cite{GM88} on complements
 of complex arrangements, S.~L\'{o}pez de Medrano \cite{LdM89} 
 on the topology of these varieties, as well as many others. The interconnections
 between these subjects is developed in the beautiful book 
 \cite{BP14} by Buchstaber-Panov.

 The objective of this paper is to show that, for 
 an admissible configuration of $m$ 
 real vectors in $\mathbb{R}^d$ \emph{whose centroid is 
 located at the origin}, the corresponding foliation
 provides a direct
 relation between the model of links and 
 the model of polyhedral products: there are continuous 
 paths in the space of the union all Siegel leaves (which is the 
 \emph{complement of a coordinate subspace arrangement} in 
 $\mathbb{C}^m$), such that each point of the link is connected 
 by a path to a unique point in the respective moment-angle complex, 
 yielding a homeomorphism between them.
 Every path is parameterized by real numbers 
 $p\in[1,\infty)$, with each $p$ 
  associated to the intersection of the $L^p$-norm minima 
  in the Siegel leaves and the $L^p$-norm unit sphere in $\mathbb{C}^m$, 
  which is a topological manifold homeomorphic to the link. In 
  this way, we can understand each polytopal moment-angle complex
  as the intersection of the unit sphere and the minima of all 
  Siegel leaves, with respect to the $L^{\infty}$-norm.
 
 This paper develops a more analytic approach to these spaces
 in the spirit of the work \cite{BM06} by Bosio and Meersseman.

  I would like to thank my Ph.D.~supervisor, Professor 
  Osamu Saeki for discussions.

\section{Notations and main results}\label{sec:nota}
Let $A=(A_1,A_2,\dots,A_m)$ be an $m$-tuple of  
vectors in $\mathbb{R}^d$, with $m>d\geq 0$ ($A_i\equiv 0$ when $d=0$); 
occasionally we treat such a tuple as a $(d\times m)$-matrix.
Denote by $[m]$ the set $\{1,2,\dots,m\}$, and for $I\subset[m]$, let 
$A(I)$ be the subtuple $(A_i)_{i\in I}$ and $\mathrm{conv}A$ 
(resp.~$\mathrm{conv}A(I)$) the convex hull of vectors from $A$
(resp.~from $A(I)$). 

We say that $A$ is \emph{admissible}, if it satisfies the following 
two conditions (cf.~\cite[Lemma 0.3]{BM06}):
 \begin{enumerate}
\item [$*_1$] (Siegel condition) $\bm{0}\in\mathrm{conv}A$;
\item [$*_2$] (weak hyperbolicity condition) if $\bm{0}\in\mathrm{conv}A(I)$, 
 then we have $\mathrm{card}(I)> d$ 
 (where $\mathrm{card}$ refers to the cardinality).
\end{enumerate}
    
Up to Section \ref{sec.3.},
we always assume that $A$ is admissible.

 Let $\mathbb{R}_{>0}$ be the set of positive real numbers, in which 
 $p\geq 1$ is a real number. For each $z=(z_i)_{i=1}^m\in\mathbb{C}^m$, 
 denote by $\lVert z\rVert_{p}$ its $L^p$-norm, namely  
 $\lVert z\rVert_{p}=(\sum_{i=1}^m|z_i|^p)^{\frac{1}{p}}$ where 
 $|z_i|=\sqrt{z_i\bar{z}_i}$.

 With respect to an $m$-tuple $A$, there is a smooth 
 \emph{foliation} $\mathcal{F}$ of $\mathbb{C}^m$ 
 given by the orbits of the action 
 \begin{equation}
   \begindc{\commdiag}[2]
   \obj(0,7)[aa]{$F\co\mathbb{C}^m
   \times\mathbb{R}^d$}
   \obj(40,7)[bb]{$\mathbb{C}^m$}
   \obj(0,0)[cc]{$(z,T)$}
   \obj(40,0)[dd]{$(z_ie^{\langle A_i,T \rangle})_{i=1}^m$.}
   \mor{aa}{bb}{}
   \mor{cc}{dd}{}[+1,6]
 \enddc\label{Def.F.}
 \end{equation}
 For each $z\in\mathbb{C}^m$, let 
 $L_z$ be the \emph{leaf} passing through $z$. $L_z$ is 
 called a \emph{Siegel leaf} if $\bm{0}$ is not 
 in its closure, otherwise we say the leaf $L_z$ is of Poincar\'{e}
 type. It follows that the union of all Siegel leaves 
 can be described by the set 
 (cf.~\cite{CKP78}, \cite{MV04}, \cite{BM06}) 
 \begin{equation}
   \mathcal{S}_A=\{z\in\mathbb{C}^m\mid \bm{0}\in\mathrm{conv}A(I_z)\}, 
  \label{Def.SA.}
\end{equation}
 where $I_z$ is the set of non-zero entries for $z=(z_i)_{i=1}^m$, 
 i.e.~$I_z=\{i\in[m]\mid |z_i|\not=0\}$. With an argument involving 
 foliations, complex analysis and the convexity, 
 the following fact is a combination of the works mentioned above, 
 which is our starting point: 
 \begin{mthm}[cf.~{\cite[Lemma 0.8, pp.~61--62]{BM06}}]
   \label{funda.thm.}
   For each $z\in\mathcal{S}_A$, there
   is a unique point $f_2(z)$ in the leaf $L_z$, such that 
   its $L^2$-norm $\lVert f_2(z)\rVert_2$ is minimal and positive.  
   The foliation $\mathcal{F}$ is trivial when restricted to $\mathcal{S}_A$,
   and 
  \begin{equation*}
   \begindc{\commdiag}[15]
   \obj(0,1)[aa]{$\Phi_A(2)\co X_A(2)
	 \times\mathbb{R}^d\times\mathbb{R}_{>0}$}
   \obj(7,1)[bb]{$\mathcal{S}_A$}
   \obj(0,0)[cc]{$(z,T,r)$}
   \obj(7,0)[dd]{$r(z_ie^{\langle A_i,T \rangle})_{i=1}^m$}
   \mor{aa}{bb}{}
   \mor{cc}{dd}{}[+1,6]
   \enddc
 \end{equation*}
   is a global diffeomorphism, where $X_A(2)$ is given 
   by the transverse intersection
     \begin{equation}
	  \begin{cases}
		\sum_{i=1}^mA_i|z_i|^{2}=\bm{0}, \\
		\lVert z\rVert_{2}=1,
	  \end{cases}\label{def.x2.}	  
	  \end{equation}
	   thus is a smooth manifold.
\end{mthm}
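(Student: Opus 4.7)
The plan is to reduce everything to the study of a single convex function on each leaf. For $z \in \mathcal{S}_A$ the leaf $L_z$ is parameterized by $T \mapsto F(z,T)$, so minimizing $\lVert \cdot \rVert_2^2$ along $L_z$ is the same as minimizing the exponential sum
\[
g_z(T) = \lVert F(z,T)\rVert_2^2 = \sum_{i \in I_z} |z_i|^2 e^{2\langle A_i, T\rangle}.
\]
I would first extract from admissibility a spanning lemma: whenever $z \in \mathcal{S}_A$, the family $\{A_i : i \in I_z\}$ spans $\mathbb{R}^d$, and moreover for every nonzero $v \in \mathbb{R}^d$ some $i \in I_z$ has $\langle A_i, v\rangle > 0$. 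Both statements follow by contradiction from Carath\'{e}odory's theorem inside a hyperplane: if $\{A_i : i \in I_z\}$ spanned a proper subspace, or if all $A_i$ with $i\in I_z$ lay in $\{w : \langle w, v\rangle \leq 0\}$ (in which case pairing $\bm{0}=\sum \lambda_i A_i$ with $v$ forces the active indices into $v^\perp$), Carath\'{e}odory would furnish $I' \subset I_z$ with $|I'| \leq d$ and $\bm{0}\in\mathrm{conv}A(I')$, contradicting $*_2$.

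Given the spanning lemma, the Hessian $4\sum_{i\in I_z} |z_i|^2 e^{2\langle A_i, T\rangle} A_iA_i^T$ of $g_z$ is positive definite, so $g_z$ is strictly convex, and the half-space form of the lemma forces $g_z(tv) \to \infty$ along every ray, hence $g_z$ is coercive. Therefore $g_z$ attains a unique global minimum at some $T_z$, and $f_2(z):=F(z,T_z)$ is the unique minimum-norm point on $L_z$; the identity $\nabla g_z(T_z)=\bm{0}$ reads $\sum_i |f_2(z)_i|^2 A_i = \bm{0}$, which is the first quadric of \eqref{def.x2.}. Writing $r(z)=\lVert f_2(z)\rVert_2$ and $\pi(z)=f_2(z)/r(z)\in X_A(2)$, the assignment $z\mapsto (\pi(z),-T_z,r(z))$ is the set-theoretic inverse of $\Phi_A(2)$, so $\Phi_A(2)$ is a bijection. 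For transversality at $z \in X_A(2)$, a linear dependence among the differentials of the $d+1$ defining equations would place $\{(1,A_i) : i\in I_z\}$ in a hyperplane of $\mathbb{R}^{d+1}$, and a short case split on whether the augmenting coordinate of the normal vanishes reduces this to the spanning lemma once more.

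For the diffeomorphism claim, the source and target both have real dimension $2m$, so by the inverse function theorem it suffices to show that $d\Phi_A(2)$ is injective. Setting $d\Phi_A(2)(dz,dT,dr)=0$ and dividing out by the nonvanishing $e^{\langle A_i, T\rangle}$ yields, for $i\in I_z$, the relation $dz_i/z_i = -(dr/r + \langle A_i, dT\rangle) \in \mathbb{R}$; feeding this into the tangency conditions for $dz\in T_zX_A(2)$ and using the quadric equations for $z$ gives $dr=0$ together with the positive-definite system $\bigl(\sum_{i\in I_z}|z_i|^2 A_iA_i^T\bigr) dT=\bm{0}$, which forces $dT=0$ and hence $dz=0$. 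The central ingredient, on which strict convexity, coercivity, transversality, and injectivity of $d\Phi_A(2)$ all rest, is the spanning lemma; once the right Carath\'{e}odory argument is in place the remaining work is routine, but without it each of these four properties can degenerate along $\partial\mathcal{S}_A$.
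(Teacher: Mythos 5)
Your proposal is correct, but note that the paper does not actually prove this statement: it is quoted as the ``starting point'' from Bosio--Meersseman \cite[Lemma 0.8]{BM06} (building on \cite{CKP78}, \cite{Mee00}, \cite{MV04}), so there is no in-paper argument to match. Your self-contained route --- strict convexity and coercivity of $T\mapsto\lVert F(z,T)\rVert_2^2$ via the Carath\'eodory/weak-hyperbolicity ``spanning lemma'', then bijectivity plus injectivity of the differential --- is sound, and it is in fact the same circle of ideas the paper deploys later for the $L^p$ case: your spanning lemma is Lemma \ref{adm.lem.} in disguise (rank of $\widetilde{A}(I_z)$ equal to $d+1$, equivalently $\bm{0}$ interior to $\mathrm{conv}A(I_z)$), your strict-convexity step is the uniqueness argument of Proposition \ref{exi.uni.}, and your coercivity step is Lemma \ref{Conc.Lem.}. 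The difference is that the paper's $L^p$ existence proof bootstraps off the $L^2$ theorem via H\"older norm equivalences, whereas you prove the $L^2$ case from scratch, which makes the whole development more self-contained. One step is stated too tersely: blowing up along every ray does not by itself imply coercivity for a general continuous function, so you should either invoke convexity (an unbounded sublevel set of a convex function contains a ray) or argue uniformly, e.g.\ $\max_{i\in I_z}\langle A_i,v\rangle\geq\delta>0$ on the unit sphere by compactness, whence $g_z(T)\geq\min_{i\in I_z}|z_i|^2e^{2\delta\lVert T\rVert}$. With that repair, and a word on why the $d=0$ case is degenerate but harmless, the argument is complete.
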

It follows  that there is a smooth function 
   \begin{equation}
	T_2\co\mathcal{S}_A\to\mathbb{R}^d, 
	\text{ $s.t.$ } f_2(z)=F(z,T_2(z)), 
	 \label{Def.T2.}
   \end{equation}
and after differentiating ${F}(z,T)$ with 
respect to $T\in\mathbb{R}^d$, one easily checks that 
the critical point corresponding to the minimum satisfies
 \begin{equation}
 \sum_{i=1}^mA_i|z_i|^2e^{2\langle A_i,T \rangle}=\bm{0},
 \label{diff.mini.}
 \end{equation}
in which $T_2(z)$ is the unique solution. Moreover, 
$f_2/\lVert f_2\rVert_2\co\mathcal{S}_A\to X_A(2)$ is a 
 smooth retraction. 

 Following their approach, we consider 
 the space of $L^p$-norm minima of those Siegel leaves. 
 Our first main theorem is the 
 following, whose proof is based on some real analysis and 
 will be given in Section \ref{sec.1.}.
\begin{mthm}\label{main.thm.}
  Let $X_A(p)$ be the intersection 
  \begin{equation}
  \begin{cases}
	\sum_{i=1}^mA_i|z_i|^{p}=\bm{0}, \\
	\lVert z\rVert_{p}=1.
  \end{cases}\label{Def.X.}
\end{equation}
 There is a unique point $f_p(z)$ in the leaf $L_z$ for each 
  $z\in\mathcal{S}_A$, whose 
  $L^p$-norm $\lVert f_p(z)\rVert_p$ is minimal and positive,  
  and the restriction of the smooth function $f_2/\lVert f_2\rVert_2\co S_A\to X_A(2)$ 
  to $X_A(p)$ induces a homeomorphism onto $X_A(2)$, for all $p\geq 1$.
  Moreover,
  \begin{equation*}
   \begindc{\commdiag}[15]
   \obj(0,1)[aa]{$\Phi_A(p)\co X_A(p)
	 \times\mathbb{R}^d\times\mathbb{R}_{>0}$}
   \obj(7,1)[bb]{$\mathcal{S}_A$}
   \obj(0,0)[cc]{$(z,T,r)$}
   \obj(7,0)[dd]{$r(z_ie^{\langle A_i,T \rangle})_{i=1}^m$,}
   \mor{aa}{bb}{}
   \mor{cc}{dd}{}[+1,6]
   \enddc
 \end{equation*}
  is a homeomorphism.
\end{mthm}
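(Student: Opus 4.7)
My plan is to adapt the $p=2$ strategy of Theorem~\ref{funda.thm.} using purely convex-analytic tools: on each Siegel leaf find the unique positive $L^p$-minimum, then promote this pointwise construction to a continuous parametrization of $\mathcal{S}_A$. For fixed $z\in\mathcal{S}_A$, set $h_z(T)=\sum_{i\in I_z}|z_i|^p e^{p\langle A_i,T\rangle}$, a convex sum of exponentials with Hessian $p^2\sum_{i\in I_z}A_iA_i^{\top}|z_i|^p e^{p\langle A_i,T\rangle}$. I first check this Hessian is positive definite: otherwise $\{A_i:i\in I_z\}$ sits in a proper subspace, and Carath\'{e}odory in that subspace supplies a subset of $I_z$ of cardinality $\le d$ whose convex hull contains $\bm{0}$, contradicting $*_2$. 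Coercivity is analogous: if no $\langle A_i,v\rangle$ were strictly positive for $i\in I_z$ and some direction $v$, the indices realizing a convex representation of $\bm{0}$ would all lie in $v^{\perp}$, again producing a forbidden subset. Strict convexity plus coercivity (plus the standard convex-function argument promoting ray-coercivity to genuine coercivity on $\mathbb{R}^d$) yields a unique minimizer $T_p(z)$, so $f_p(z):=F(z,T_p(z))$ is the unique positive $L^p$-minimum on $L_z$ and satisfies the critical equation $\sum_i A_i|z_i|^p e^{p\langle A_i,T_p(z)\rangle}=\bm{0}$.

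Next I would show that $T_p\colon\mathcal{S}_A\to\mathbb{R}^d$ is continuous. The map $G(z,T):=\sum_i A_i|z_i|^p e^{p\langle A_i,T\rangle}$ is jointly continuous on $\mathcal{S}_A\times\mathbb{R}^d$, so for $z_n\to z$ in $\mathcal{S}_A$ a uniform version of the coercivity estimate forces $\{T_p(z_n)\}$ to stay bounded; any subsequential limit $T^{*}$ satisfies $G(z,T^{*})=\bm{0}$ and equals $T_p(z)$ by uniqueness. Continuity of $T_p$ yields continuity of $f_p$ and of $\lVert f_p\rVert_p$, which makes
\[
w\longmapsto\bigl(f_p(w)/\lVert f_p(w)\rVert_p,\;-T_p(w),\;\lVert f_p(w)\rVert_p\bigr)
\]
a continuous two-sided inverse of $\Phi_A(p)$ (the critical equation guarantees the first coordinate lies in $X_A(p)$), so $\Phi_A(p)$ is a homeomorphism.

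For the claim about $f_2/\lVert f_2\rVert_2$, I conjugate by the known diffeomorphism $\Phi_A(2)$. Exploiting the scale-invariances $f_2(rz)=rf_2(z)$ and $T_2(rz)=T_2(z)$, a direct calculation gives
\[
(\Phi_A(2)^{-1}\circ\Phi_A(p))(z,T,r)=\bigl(f_2(z)/\lVert f_2(z)\rVert_2,\;T-T_2(z),\;r\lVert f_2(z)\rVert_2\bigr).
\]
The second and third coordinates are homeomorphisms in $(T,r)$ depending continuously on $z$, so the whole map is a homeomorphism of product spaces exactly when $z\mapsto f_2(z)/\lVert f_2(z)\rVert_2$ restricts to a homeomorphism $X_A(p)\to X_A(2)$; since $\Phi_A(p)$ and $\Phi_A(2)$ are already homeomorphisms, this is forced.

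The main obstacle is the low regularity of $z\mapsto|z_i|^p$ when $p$ is not an even integer, and especially at $p=1$ where differentiability fails at $z_i=0$. The smooth implicit function theorem used for $p=2$ in \cite{BM06} is therefore unavailable, which is precisely why the conclusion weakens from ``diffeomorphism'' in Theorem~\ref{funda.thm.} to ``homeomorphism'' here; continuity of $T_p$ must be extracted from the convex-analytic stability argument above, and the same limitation forces $X_A(p)$ to be regarded as a topological manifold rather than a smooth one.
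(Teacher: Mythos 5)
Your proposal is correct, and while it reaches the same conclusions it is organized quite differently from the paper. For existence of the minimizer, the paper compares $\lVert F(z,T)\rVert_p$ with the already-established $p=2$ case via Cauchy--Schwarz and H\"older (Lemma \ref{Conc.Lem.} plus Theorem \ref{funda.thm.}), whereas you prove coercivity of $T\mapsto\sum_{i\in I_z}|z_i|^pe^{p\langle A_i,T\rangle}$ from scratch out of the Siegel and weak hyperbolicity conditions (your Carath\'eodory argument in a proper subspace is essentially a proof of Lemma \ref{adm.lem.}); the uniqueness arguments coincide (strict convexity along segments from the rank-$d$ condition). The bigger divergence is in how continuity is obtained, and in which order the two assertions are proved. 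The paper never proves continuity of $T_p$ on $\mathcal{S}_A$ directly: it first shows $f_2/\lVert f_2\rVert_2\big|_{X_A(p)}$ is a continuous bijection from a compact space to a Hausdorff one, hence a homeomorphism (Proposition \ref{conti.}), then uses its continuous inverse to build the transition map $\varphi$ and hence $\Phi_A(p)^{-1}=\varphi\circ\Phi_A(2)^{-1}$ (Theorem \ref{final.1.}), and only afterwards extracts continuity of $T_p$ as Corollary \ref{solu.}. You reverse this: you establish continuity of $T_p$ on all of $\mathcal{S}_A$ by a uniform-coercivity/subsequence stability argument, write down the explicit inverse of $\Phi_A(p)$, and then deduce the statement about $f_2/\lVert f_2\rVert_2$ by conjugating $\Phi_A(p)$ with $\Phi_A(2)^{-1}$ and reading off the first factor. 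Both routes are sound. The paper's compactness trick is softer and sidesteps any quantitative estimate, at the cost of getting continuity of $T_p$ only a posteriori and only via the $p=2$ machinery; your route is more self-contained and delivers the stronger fact (global continuity of $T_p$) up front, though the ``uniform version of the coercivity estimate'' deserves one explicit line (for $z_n\to z$ one has $I_{z_n}\supseteq I_z$ eventually and $|z_{n,i}|^p\geq\tfrac12|z_i|^p$ for $i\in I_z$, so the lower bound along rays is uniform in $n$), and the final ``this is forced'' step should be spelled out as the observation that the triangular map $(z,T,r)\mapsto(g(z),\,T-T_2(z),\,r\rho(z))$ is a homeomorphism of products if and only if $g$ is a homeomorphism.
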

Similar to \eqref{Def.T2.}, for each $p$ we can define a continuous 
function $T_p\co \mathcal{S}_A\to \mathbb{R}^d$ such that 
$f_p/\lVert f_p\rVert_p\co\mathcal{S}_A\to X_A(p)$ is a retraction, 
where $f_p(z)={F}(z,T_p(z))$ is the function of 
$L^p$-norm minima in the leaf $L_z$.

It is interesting to imagine what will happen when $p$ tends to
infinity, and this will be discussed in Section \ref{sec.2.}.
First note that the set 
  \begin{equation}
	K_A=\{\sigma\subset[m]\mid \bm{0}\in\mathrm{conv}A([m]\setminus\sigma)
	\}
	\label{Def.KA.}
  \end{equation}
  is an \emph{abstract simplicial complex} (cf.~\cite[Lemma 0.12]{BM06}), 
  i.e.~all subsets of $\sigma$ will be
  in $K_A$ if $\sigma$ is. It turns out that 
  with each $z\in\mathcal{S}_A$ fixed, $T_p(z)$ and
$f_p(z)/\lVert f_p(z) \rVert_p$ are continuous in $p\in[1,\infty)$ 
  (see Proposition \ref{conti.Tp.});
when $p$ goes to infinity, $f_p(z)/\lVert f_p(z)\rVert_p$ approaches
to the \emph{moment-angle complex} $(D^2,S^1)^{K_A}$ (see \eqref{M.A.C.} 
and Proposition \ref{go.out.} for details), which 
  is a subset of the intersection of $\mathcal{S}_A$ with 
  the $L^{\infty}$-norm unit sphere in $\mathbb{C}^m$
  ($\lVert z \rVert_{\infty}=\max\{|z_i|\}_{i=1}^{m}$).

We say that the tuple $A$ is \emph{centered at the origin}, 
if the centroid of all vectors in $A$ is located at the origin:  
 \begin{equation}
   \sum_{i=1}^{m}A_i=\bm{0}.
   \label{cent.orig.}
 \end{equation}
Under this additional assumption, $K_A$ is isomorphic 
to the boundary of a convex polytope arising from the 
\emph{Gale transform} of $A$ (see Proposition \ref{Lem.Of.Basis.});
based on a result of Panov and Ustinovsky in \cite{PU12},  
in Section \ref{sec.3.} we will show that 
$f_p(z)/\lVert f_p(z) \rVert_p$ converges to a unique 
point in $(D^2,S^1)^{K_A}$ as $p$ tends to infinity.
With a similar treatment as the one for Theorem \ref{main.thm.},  
the following theorem holds:
 \begin{mthm}\label{mthm2.}	  
	Assume that $A$ is an admissible tuple centered at the origin.
	Then the restriction $f_2/\lVert f_2\rVert_2|_{(D^2,S^1)^{K_A}}\co
  (D^2,S^1)^{K_A}\to X_A(2)$ 
  is a homeomorphism. Moreover,
  \begin{equation*}
   \begindc{\commdiag}[15]
   \obj(0,1)[aa]{$\Phi_A(\infty)\co (D^2,S^1)^{K_A}\times\mathbb{R}^d\times\mathbb{R}_{>0}$}
   \obj(9,1)[bb]{$\mathcal{S}_{A}$}
   \obj(0,0)[cc]{$(z,T,r)$}
   \obj(9,0)[dd]{$r(z_ie^{\langle A_i,T \rangle})_{i=1}^m$}
   \mor{aa}{bb}{}
   \mor{cc}{dd}{}[+1,6]
   \enddc
 \end{equation*}
  is a homeomorphism. 
\end{mthm}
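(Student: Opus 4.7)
The plan is to mirror the proof of Theorem \ref{main.thm.} with the $p\to\infty$ limiting data in place of the finite-$p$ quantities. By the convergence established in Section \ref{sec.3.} (built on \cite{PU12}), for every $z\in\mathcal{S}_A$ the limits $T_\infty(z):=\lim_{p\to\infty}T_p(z)$ and $f_\infty(z):=\lim_{p\to\infty}f_p(z)$ exist, depend continuously on $z$, and $\rho(z):=f_\infty(z)/\lVert f_\infty(z)\rVert_\infty$ lies in $(D^2,S^1)^{K_A}$. I would first observe the inclusion $(D^2,S^1)^{K_A}\subset\mathcal{S}_A$: any point $z$ of the polyhedral product has $[m]\setminus I_z$ contained in some simplex of $K_A$, and since $K_A$ is simplicial this forces $[m]\setminus I_z\in K_A$, which by \eqref{Def.KA.} is exactly the defining condition of $\mathcal{S}_A$. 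Hence $\rho\co\mathcal{S}_A\to(D^2,S^1)^{K_A}$ is a well-defined continuous map.

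Next I would show that each orbit of the extended $\mathbb{R}^d\times\mathbb{R}_{>0}$-action (a Siegel leaf together with positive scalings) meets $(D^2,S^1)^{K_A}$ in exactly one point, namely $\rho(z)$ for any representative $z$. Orbit-invariance of $\rho$ is inherited from that of $f_p/\lVert f_p\rVert_p$ at every finite $p$ (leaves and $L^p$-minima scale linearly under $\mathbb{R}_{>0}$ and are preserved by the $\mathbb{R}^d$-action by construction). Uniqueness of the orbit representative in $(D^2,S^1)^{K_A}$ is the substantive content of the Section \ref{sec.3.} convergence statement, where the centroid hypothesis \eqref{cent.orig.} enters to single out a canonical limit. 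Combined with Theorem \ref{funda.thm.}, which exhibits $X_A(2)$ as the analogous single-point cross-section, one obtains mutually inverse bijections $(D^2,S^1)^{K_A}\leftrightarrow X_A(2)$ realized by $\phi:=f_2/\lVert f_2\rVert_2|_{(D^2,S^1)^{K_A}}$ and $\rho|_{X_A(2)}$. Both are continuous; since $(D^2,S^1)^{K_A}$ is compact and $X_A(2)$ is Hausdorff, $\phi$ is a homeomorphism, which proves the first assertion.

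For the homeomorphism $\Phi_A(\infty)$, I would factor it through $\Phi_A(2)$. Using the identity $z_i=\phi(z)_i\,\lVert f_2(z)\rVert_2\, e^{-\langle A_i,T_2(z)\rangle}$ valid on $(D^2,S^1)^{K_A}$, the formula for $\Phi_A(\infty)$ rewrites as
\[
   r\bigl(z_ie^{\langle A_i,T\rangle}\bigr)_{i=1}^m
   =\bigl(r\lVert f_2(z)\rVert_2\bigr)\bigl(\phi(z)_ie^{\langle A_i,T-T_2(z)\rangle}\bigr)_{i=1}^m,
\]
so $\Phi_A(\infty)=\Phi_A(2)\circ\Psi$ with $\Psi(z,T,r):=(\phi(z),T-T_2(z),r\lVert f_2(z)\rVert_2)$. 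Since $\phi$ is a homeomorphism and both $T_2$ and $\lVert f_2(\cdot)\rVert_2$ are continuous on the compact set $(D^2,S^1)^{K_A}$, the map $\Psi$ is a homeomorphism with explicit continuous inverse, while $\Phi_A(2)$ is a diffeomorphism by Theorem \ref{funda.thm.}. The composition $\Phi_A(\infty)$ is therefore a homeomorphism.

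The main obstacle is the very first step together with the uniqueness half of the second: the entire argument rests on the existence, continuity, and orbit-invariance of the limit $\rho$ landing in $(D^2,S^1)^{K_A}$. Because the $L^\infty$-norm is not strictly convex, the $L^\infty$-minimum on a leaf is not unique a priori and the critical-point equation \eqref{diff.mini.} has no literal $p=\infty$ analogue; isolating a canonical limit point is precisely where the centroid assumption \eqref{cent.orig.} and the Panov-Ustinovsky polyhedral argument \cite{PU12} play their essential role.
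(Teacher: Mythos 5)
Your proposal is correct and follows essentially the same route as the paper: it defers the key existence-and-uniqueness of the orbit representative in $(D^2,S^1)^{K_A}$ to the Panov--Ustinovsky polyhedral argument of Section \ref{sec.3.} (Proposition \ref{Panov.Ustinovsky.}), then gets the first assertion from a continuous bijection of a compact space onto a Hausdorff one (as in Proposition \ref{conti.}/\ref{conti.sec.}) and the second from the coordinate-transition factorization through $\Phi_A(2)$ (as in Theorem \ref{final.1.}/\ref{final.2.}). The only cosmetic difference is that you package existence via the $p\to\infty$ limit rather than directly via the unique-intersection statement, which changes nothing of substance.
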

Therefore, we can understand such a moment-angle complex
$(D^2,S^1)^{K_A}$ as ``$X_A(\infty)$'', namely 
the intersection of the $L^{\infty}$-norm minima in the Siegel leaves with the 
$L^{\infty}$-norm unit sphere in $\mathbb{C}^m$	
(the reader is encouraged to imagine the deformation from $X_A(1)$ to 
$X_A(\infty)$ in the case $d=0$).

As an application, in Section \ref{sec.4.} 
we give an alternative proof 
for a rigidity theorem of Bosio and Meersseman (cf.~\cite[Theorem 4.1]{BM06}):
if two admissible $m$-tuples $A$ and $A'$ are both centered at the origin 
such that $K_A$ and $K_{A'}$ are isomorphic simplicially,  
then there is a diffeomorphism between associated links $X_A(2)$ and $X_{A'}(2)$
(see Proposition \ref{rigi.} for more details).

From its definition \eqref{Def.F.}, notice that 
each leaf ${L}_z$ is contained in $\mathcal{S}_A\cap\mathbb{R}^m$ if 
and only if $z\in\mathcal{S}_A\cap\mathbb{R}^{m}$. Hence 
the theorems and properties above are also true when 
restricted to the subspace $\mathbb{R}^m$ in $\mathbb{C}^m$.

At last in Section \ref{sec.4.}, we shall illustrate 
that the restriction of $f_2/\lVert f_2\rVert_{2}$
to the real moment-angle complex 
$(D^1,S^0)^{K_A}=(D^2,S^1)^{K_A}\cap\mathbb{R}^m$ 
is a \emph{piecewise differentiable} homeomorphism 
onto $X_A(2)\cap\mathbb{R}^m$, provided that $A$ is 
admissible and centered at the origin
(see Definition \ref{smooth.PL.}, Lemma \ref{inj.lem.} 
and Proposition \ref{fina.pl.} for more details). In this way 
these real moment-angle complexes
can be smoothed, as piecewise linear manifolds.

\section{Proof of Theorem \ref{main.thm.}}\label{sec.1.}
We start with a well-known lemma due to Meersseman and Verjovsky,
whose proof is omitted here:
\begin{lem}[cf.~{\cite[Lemma 1.1]{MV04}}, {\cite[Lemma 0.3]{BM06}}]\label{adm.lem.}
  For an admissible tuple $A=(A_i)_{i=1}^m$, let 
  $\widetilde{A}=(\widetilde{A}_i)_{i=1}^m$ be the augmentation 
  with 
  $\widetilde{A}_i=(A_i^{\mathrm{T}},1)^{\mathrm{T}}\in\mathbb{R}^{d+1}$, 
  $i=1,2,\dots,m$. Then for any $I\subset[m]$ such that
  $\bm{0}\in\mathrm{conv}A(I)$, the rank of the subtuple $\widetilde{A}(I)$ 
  is $d+1$.
\end{lem}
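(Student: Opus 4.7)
The plan is to argue by contradiction: assuming that the rank of $\widetilde{A}(I)$ is at most $d$, I will extract a subset $I' \subseteq I$ with $|I'| \leq d$ and $\bm{0} \in \mathrm{conv}A(I')$, in direct violation of the weak hyperbolicity condition $*_2$.

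The decisive observation is that the augmentation converts the convex relation among the $A_i$ into a very specific linear relation among the $\widetilde{A}_i$. Writing $\bm{0} = \sum_{i \in I} \lambda_i A_i$ with $\lambda_i \geq 0$ and $\sum_i \lambda_i = 1$, we obtain $\sum_{i \in I} \lambda_i \widetilde{A}_i = (\bm{0}, 1)$. Consequently, the last standard basis vector $e_{d+1}$ belongs to the linear span $V := \mathrm{span}\{\widetilde{A}_i : i \in I\} \subset \mathbb{R}^{d+1}$.

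Now if $\dim V \leq d$, I would choose a nonzero linear functional $\phi(x, t) = \langle c, x\rangle + c_0 t$ vanishing on $V$. Because $e_{d+1} \in V$, evaluation forces $c_0 = 0$, so $c \in \mathbb{R}^d$ is nonzero; the relations $\phi(\widetilde{A}_i) = 0$ then collapse to $\langle c, A_i\rangle = 0$ for every $i \in I$. Hence every vector in $A(I)$ lies in the hyperplane $H = c^{\perp} \subset \mathbb{R}^d$ through the origin, which is a linear subspace of dimension $d - 1$. Applying Carath\'{e}odory's theorem inside $H$ to the point $\bm{0} \in \mathrm{conv}A(I) \subset H$ produces a subset $I' \subseteq I$ with $|I'| \leq (d-1) + 1 = d$ such that $\bm{0} \in \mathrm{conv}A(I')$, contradicting $*_2$.

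There is no genuinely hard step; rather, the conceptual pivot is the lifting trick that places $e_{d+1}$ inside $V$. This is what strips the constant term $c_0$ from $\phi$ and thereby pins the subtuple $A(I)$ to a hyperplane \emph{through the origin} rather than merely an affine one, which is exactly the configuration Carath\'{e}odory can shrink to at most $d$ vectors while still retaining $\bm{0}$ in the convex hull. Should $c_0$ fail to vanish, the argument would only produce an affine constraint $\langle c, A_i\rangle = -c_0$, insufficient for triggering $*_2$.
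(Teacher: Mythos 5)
Your proof is correct and complete. The paper itself omits the proof of this lemma (deferring to \cite{MV04} and \cite{BM06}), and your argument is essentially the standard one from those references: the rank condition on $\widetilde{A}(I)$ is the linear-algebra encoding of the statement that $A(I)$ affinely spans $\mathbb{R}^d$, and your lifting trick correctly shows that a rank deficiency forces $A(I)$ into a hyperplane \emph{through the origin} (the key point being that $e_{d+1}$ lies in the span, which kills the constant term $c_0$), after which Carath\'{e}odory's theorem in dimension $d-1$ yields a subset $I'$ with $\mathrm{card}(I')\leq d$ and $\bm{0}\in\mathrm{conv}A(I')$, contradicting weak hyperbolicity.
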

\begin{prop}\label{exi.uni.}
 For each $z\in\mathcal{S}_A$ given, there
 is a unique point $f_p(z)$ in the leaf $L_z$, such that 
 $\lVert f_p(z)\rVert_p$ is minimal and positive. 	   
 \end{prop}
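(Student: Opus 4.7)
The plan is to reduce the problem to minimizing the smooth real-valued function
\[
g(T)=\lVert F(z,T)\rVert_p^p=\sum_{i\in I_z}|z_i|^{p}\,e^{p\langle A_i,T\rangle}
\]
over $T\in\mathbb{R}^d$, and then to show $g$ is strictly convex and coercive, which will yield a unique global minimizer $T_p(z)$; setting $f_p(z)=F(z,T_p(z))$ then gives the desired point in $L_z$.

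For strict convexity I would compute the Hessian
\[
\nabla^2 g(T)=p^{2}\sum_{i\in I_z}|z_i|^{p}e^{p\langle A_i,T\rangle}A_iA_i^{\mathrm{T}},
\]
which is positive semidefinite, and positive definite precisely when the vectors $\{A_i\}_{i\in I_z}$ span $\mathbb{R}^d$. Since $z\in\mathcal{S}_A$ means $\bm{0}\in\mathrm{conv}A(I_z)$, Lemma \ref{adm.lem.} says $\widetilde{A}(I_z)$ has rank $d+1$, which forces $A(I_z)$ to have rank $d$; hence strict convexity follows.

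The main obstacle is coercivity, which is where both admissibility conditions have to be used simultaneously. The key claim I want to establish is:
\emph{for every nonzero $T\in\mathbb{R}^d$, there exists $i\in I_z$ with $\langle A_i,T\rangle>0$.}
Assuming this fails, $\langle A_i,T\rangle\le 0$ for all $i\in I_z$; pick any representation $\bm{0}=\sum_{i\in I_z}\mu_iA_i$ with $\mu_i\ge 0$, $\sum\mu_i=1$, and let $J=\{i:\mu_i>0\}$. Pairing with $T$ forces $\langle A_i,T\rangle=0$ for every $i\in J$. But $\bm{0}\in\mathrm{conv}A(J)$, so Lemma \ref{adm.lem.} again ensures $A(J)$ spans $\mathbb{R}^d$, so $T=0$—contradiction. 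Once the claim is proved, a compactness argument on the unit sphere yields a constant $c>0$ with $\max_{i\in I_z}\langle A_i,T\rangle\ge c\lVert T\rVert$ for all $T$, whence
\[
g(T)\ge \Bigl(\min_{i\in I_z}|z_i|^p\Bigr)e^{pc\lVert T\rVert}\longrightarrow\infty
\quad\text{as }\lVert T\rVert\to\infty.
\]

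Combining strict convexity and coercivity produces a unique global minimizer $T_p(z)\in\mathbb{R}^d$, and setting $f_p(z)=F(z,T_p(z))$ gives the unique point in $L_z$ of minimal $L^p$-norm. Positivity of $\lVert f_p(z)\rVert_p$ is automatic: since $I_z\ne\varnothing$ and $|z_i|>0$ for $i\in I_z$, we have $g(T)>0$ for every $T$.
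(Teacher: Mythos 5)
Your proof is correct, and the existence half takes a genuinely different route from the paper's. For uniqueness the two arguments coincide in substance: the paper shows the second derivative of $p\mapsto\lVert F(z,(1-t)T_1+tT_2)\rVert_p^p$ along segments is strictly positive using the rank-$d$ property of $A(I_z)$ from Lemma \ref{adm.lem.}, which is exactly your statement that the Hessian $\nabla^2 g$ is positive definite. For existence, however, the paper does not prove coercivity from scratch: it quotes the $p=2$ case from Theorem \ref{funda.thm.} (i.e.\ from Bosio--Meersseman), establishes Lemma \ref{Conc.Lem.} (growth of $\lVert F(z,T)\rVert_2$ as $\lVert T\rVert_2\to\infty$) on that basis, and then transfers the conclusion to $p=1$ via the inequality \eqref{Cauc.Schw.} and to general $p$ via H\"older's inequality \eqref{Hold.}. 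You instead prove coercivity of $g(T)=\sum_{i\in I_z}|z_i|^pe^{p\langle A_i,T\rangle}$ directly: your key claim that every nonzero $T$ satisfies $\langle A_i,T\rangle>0$ for some $i\in I_z$ follows correctly from the Siegel condition $\bm{0}\in\mathrm{conv}A(I_z)$ together with Lemma \ref{adm.lem.} applied to the support $J$ of a convex representation of $\bm{0}$, and the compactness argument on the unit sphere then gives the exponential lower bound. This makes your proof self-contained and uniform in $p$ (it reproves the $p=2$ case rather than assuming it), whereas the paper's bootstrapping is shorter given that Theorem \ref{funda.thm.} is already available; note also that your coercivity estimate would render Lemma \ref{Conc.Lem.} unnecessary for this proposition.
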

\begin{proof}
 (Uniqueness (cf.~\cite{CKP78}, \cite{Mee00}, \cite{MV04}).) 
 Assume that $F_z$ has 
 two local minima, i.e.~$T_1$ and $T_2$ in $\mathbb{R}^d$ 
 such that they are both critical points 
 of 
 $(\lVert{F}(z,T)\rVert_p)^p=\sum_{i=1}^m|z_i|^pe^{p \langle A_i,T\rangle}$, 
 which means
  	 \[\sum_{i=1}^mA_i|z_i|^pe^{p\langle A_i,T_j\rangle}=\bm{0}, 
   \quad j=1,2.
   \]
   We define a function 
   $h\co [0,1]\to\mathbb{R}$ such that
   $h(t)=(\lVert{F}(z,(1-t)T_1+tT_2)\rVert_p)^p$,
  clearly 
   \begin{equation} 
	 \frac{\mathrm{d}h}{\mathrm{d}t}=p\sum_{i=1}^m\langle A_i,T_2-T_1 \rangle |z_i|^p
	e^{p \langle A_i,(1-t)T_1+tT_2\rangle}. \label{Diff.F.}
	\end{equation}
	From Lemma \ref{adm.lem.}, the subtuple $A(I_z)$ has rank $d$ 
	($I_z\subset [m]$ consists of entries $i$ such that 
  $z_i\not=0$), which is independent 
  of $z\in\mathcal{S}_A$, thus 
  there exists $i\in I_z$ such that $\langle A_i, T_2-T_1\rangle$ 
  does not vanish; it follows that
  the second derivative of $h$ is strictly positive,  
  hence its first derivative \eqref{Diff.F.} 
  is strictly increasing, which is a contradiction.\\ 
  (Existence.) First from Cauchy-Schwarz inequality 
  \begin{equation}
   \lVert F(z,T)\rVert_2\leq\lVert F(z,T)\rVert_1\leq\sqrt{m}\lVert F(z,T)\rVert_2,
	\label{Cauc.Schw.}
  \end{equation}
together with Theorem \ref{funda.thm.} and the Lemma \ref{Conc.Lem.} below, 
we conclude that $\lVert F(z,T)\rVert_1$ bounds away from zero, 
and stays large whenever $\lVert T\rVert_2$ is large. 
Thus the minimum of $\lVert F(z,T)\rVert_1$ is positive, 
and it appears only when $T$ is in the interior of a ball 
of finite radius.
So the case $p=1$ is clear. For general cases when $p\not=1,2$, 
H\"{o}lder's inequality implies
\begin{equation}
  \lVert F(z,T)\rVert_p\leq\lVert F(z,T)\rVert_1\leq
\sqrt[q]{m}\lVert F(z,T)\rVert_p,
  \label{Hold.}
\end{equation}
here $q>1$ such that $1/p+1/q=1$. We can 
repeat the previous argument and then the proof is completed.
\end{proof}

\begin{lem}\label{Conc.Lem.}
With $z\in\mathcal{S}_A$ given,	for any $N> 0$, there exists 
 $R>0$, such that $\lVert F(z,T)\rVert_2> N$ whenever
$\lVert T\rVert_2>R$.
\end{lem}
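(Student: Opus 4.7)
The plan is to exploit the exponential growth in at least one direction guaranteed by the Siegel and weak hyperbolicity conditions. Writing $T = r u$ with $u \in S^{d-1}$ and $r = \lVert T \rVert_2$, we have
\begin{equation*}
\lVert F(z,T)\rVert_2^{\,2} \;=\; \sum_{i\in I_z} |z_i|^2 e^{2r\langle A_i,u\rangle},
\end{equation*}
so it suffices to produce, for every unit vector $u$, a uniformly positive exponent $\langle A_i,u\rangle$ for some index $i\in I_z$.

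First I would define $\mu(u) = \max_{i\in I_z}\langle A_i,u\rangle$ for $u\in S^{d-1}$ and argue $\mu(u)>0$ pointwise. Since $z\in\mathcal{S}_A$, the Siegel condition gives $\bm{0} = \sum_{i\in I_z}\lambda_i A_i$ with $\lambda_i\geq 0$ and $\sum\lambda_i=1$; pairing with $u$ yields $\sum_{i\in I_z}\lambda_i\langle A_i,u\rangle = 0$, which forces $\mu(u)\geq 0$. Strict positivity is the key point: by Lemma \ref{adm.lem.}, $\widetilde{A}(I_z)$ has rank $d+1$, so $A(I_z)$ has rank $d$ and therefore spans $\mathbb{R}^d$; consequently the $\langle A_i,u\rangle$ cannot all vanish for a nonzero $u$, ruling out the degenerate case $\mu(u)=0$.

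Next I would invoke compactness: $\mu$ is continuous on the compact sphere $S^{d-1}$ and strictly positive, so $c := \min_{u\in S^{d-1}}\mu(u) > 0$. Setting $\delta := \min_{i\in I_z}|z_i|^2 > 0$ and selecting for each $u$ an index $i(u)\in I_z$ with $\langle A_{i(u)},u\rangle\geq c$, one obtains
\begin{equation*}
\lVert F(z,T)\rVert_2^{\,2} \;\geq\; |z_{i(u)}|^2 e^{2rc} \;\geq\; \delta\, e^{2rc}.
\end{equation*}
Given $N>0$, choosing $R = \tfrac{1}{2c}\log(N^2/\delta)$ (and $R>0$) then yields $\lVert F(z,T)\rVert_2 > N$ for all $\lVert T\rVert_2 > R$.

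The main obstacle is the strict positivity in step two; without the full rank conclusion of Lemma \ref{adm.lem.} one could have $\mu(u)\equiv 0$ on a nontrivial subspace of directions, along which $\lVert F(z,T)\rVert_2$ would stay bounded and the properness of $T\mapsto \lVert F(z,T)\rVert_2$ would fail. Everything else is a routine combination of a continuity/compactness argument on $S^{d-1}$ with the obvious lower bound on the nonzero coordinates $|z_i|$.
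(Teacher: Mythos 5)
Your overall strategy is sound and genuinely different from the paper's: the paper deduces properness of $T\mapsto\lVert F(z,T)\rVert_2$ from the convexity of $t\mapsto\lVert F(z,(1-t)T_1+tT_2)\rVert_2^2$ along rays issuing from the known $L^2$-minimum $T_2(z)$ supplied by Theorem \ref{funda.thm.}, which yields only linear growth in $r=\lVert T-T_2(z)\rVert_2$; you instead aim for a direct exponential lower bound $\delta e^{2rc}$ via a compactness argument on the sphere of directions. Your route has the advantage of being self-contained (it does not lean on Theorem \ref{funda.thm.}) and of giving a quantitatively stronger estimate.

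There is, however, a genuine gap in the key step, the strict positivity of $\mu(u)=\max_{i\in I_z}\langle A_i,u\rangle$. From the convex combination $\sum_{i\in I_z}\lambda_i\langle A_i,u\rangle=0$ you correctly conclude $\mu(u)\geq 0$, and from the rank of $A(I_z)$ you correctly conclude that the numbers $\langle A_i,u\rangle$, $i\in I_z$, do not all vanish. But $\mu(u)=0$ does not mean they all vanish: it means they are all $\leq 0$ with at least one equal to $0$, which is perfectly compatible with some of them being strictly negative. So the two facts you establish do not together exclude $\mu(u)=0$, and the inference ``therefore $\mu(u)>0$'' does not follow as written. The repair is to apply Lemma \ref{adm.lem.} to the support $J=\{i\in I_z\mid\lambda_i>0\}$ rather than to all of $I_z$: if $\mu(u)=0$, then every term of $\sum_{i\in J}\lambda_i\langle A_i,u\rangle=0$ is $\leq 0$, forcing $\langle A_i,u\rangle=0$ for every $i\in J$; but $\bm{0}\in\mathrm{conv}A(J)$, so Lemma \ref{adm.lem.} gives $A(J)$ rank $d$, and a spanning set cannot lie in the hyperplane $u^{\perp}$ for $u\neq\bm{0}$. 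With this correction the remainder of your argument --- continuity of $\mu$ as a finite maximum of linear functions, compactness of $S^{d-1}$ giving $c>0$, the bound $\lVert F(z,T)\rVert_2^2\geq\delta e^{2rc}$, and the choice of $R$ --- goes through.
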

\begin{proof}
Let $T_2(z)$ be the point in $\mathbb{R}^d$ such that 
$\lVert{F}(z,T_2(z))\rVert_2$ is minimal (see \eqref{Def.T2.} for details).
 Denote by $u(t;T_1,T_2)$ the derivative 
of $(\lVert F(z,(1-t)T_1+tT_2)\rVert_2)^2$ with respect to 
$t\in [0,1]$, for 
$T_1,T_2\in \mathbb{R}^d$,
and let $B(r,T_2(z))$ be the ball 
with radius $r$ centered at $T_2(z)$. 
 Since $T_2(z)$ is the unique minimum, 
 for all $y\in \partial B(1,T_2(z))$ on the 
 boundary, 
 there is a positive $\varepsilon$, such that
\[
 (\lVert{F}(z,y)\rVert_2)^2-
 (\lVert{F}(z,T_2(z))\rVert_2)^2=
 \int_{0}^{1}u(t;T_2(z),y)dt >\varepsilon,
\]
therefore we can choose
  $t(y)\in(0,1)$ such that 
  \[ u(t(y);T_2(z),y)>\varepsilon,\]
  by the mean value theorem. 
  For $r>1$, assume that 
  $y_r\in \partial B(r,T_2(z))$ with $y\in\partial B(1,T_2(z))$
   on the ray from $T_2(z)$ to $y_r$, 
  by the monotonicity of $u(t;T_2,y_r)$ (see the uniqueness part in 
  the proof of Proposition \ref{exi.uni.}), 
  we have 
  \[u(t;y,y_r)>u(t(y);T_2,y),\]
  thus
  \begin{align*}
  (\lVert{F}(z,y_r)\rVert_2)^2-
  (\lVert{F}(z,T_2(z))\rVert_2)^2=
  \int_{0}^{1}u(t;T_2,y)dt+\int_{0}^{1}u(t;y,y_r)dt
  >\varepsilon+(r-1)\varepsilon,
  \end{align*}
 from which the conclusion follows.
\end{proof}

The function of minima 
$f_p\co\mathcal{S}_A\to \mathcal{S}_A$ is 
well defined by Proposition \ref{exi.uni.}; 
but except for the case $p=2$, it remains to prove its 
continuity. In what follows 
we shall illustrate this by showing 
the continuity of the restriction
$f_p/\lVert f_p\rVert_p\big|_{X_A(2)}$ first, 
and then it will follow from the global diffeomorphism $\Phi_A(2)$ 
defined in Theorem \ref{funda.thm.}.
\begin{prop}\label{conti.}
   The restriction $f_2/\lVert f_2\rVert_2\big|_{X_A(p)}$ 
   of the smooth function $f_2/\lVert f_2\rVert_2\co S_A\to X_A(2)$ 
 induces a homeomorphism onto $X_A(2)$, whose 
 inverse is $f_p/\lVert f_p\rVert_p\big|_{X_A(2)}\co X_A(2)\to X_A(p)$. 
\end{prop}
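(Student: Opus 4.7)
First I would observe that $X_A(p)$ is precisely the set of points $z\in\mathbb{C}^m$ with $\lVert z\rVert_p=1$ for which $T=\bm{0}$ is a critical point of the smooth function $T\mapsto\lVert F(z,T)\rVert_p^p=\sum_{i=1}^m|z_i|^p e^{p\langle A_i,T\rangle}$. Indeed, differentiating in $T$ and evaluating at $T=\bm{0}$ yields $p\sum_{i=1}^m A_i|z_i|^p$, which vanishes exactly by the first equation of \eqref{Def.X.}. Combining this with the strict-convexity uniqueness argument already given in Proposition \ref{exi.uni.} (which works verbatim for any $p\geq 1$), we conclude $X_A(p)\subset\mathcal{S}_A$ and $f_p(z)=F(z,\bm{0})=z$ for every $z\in X_A(p)$; the case $p=2$ gives the analogous fact for $X_A(2)$.

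Next, I would verify that $f_2/\lVert f_2\rVert_2\big|_{X_A(p)}$ and $f_p/\lVert f_p\rVert_p\big|_{X_A(2)}$ are set-theoretic inverses, using the positive-scaling equivariance $f_q(cz)=c\cdot f_q(z)$ for $c>0$ and $q\geq 1$; this follows because $F(cz,T)=c\cdot F(z,T)$ and $\lVert\cdot\rVert_q$ is multiplicative under positive dilation, so the leaf $L_{cz}=c\cdot L_z$ and the minimizer rescales by $c$. Given $w\in X_A(2)$, set $z:=f_p(w)/\lVert f_p(w)\rVert_p$ and $c:=1/\lVert f_p(w)\rVert_p$; then $z=c\cdot f_p(w)\in c\cdot L_w=L_z$, so by equivariance and the identity $f_2(w)=w$ we obtain $f_2(z)=c\cdot f_2(w)=cw$, whence $f_2(z)/\lVert f_2(z)\rVert_2=w$. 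Exchanging the roles of $p$ and $2$ gives the other composition, and also confirms that the image of each restricted map lies in the claimed target.

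Continuity of $f_2/\lVert f_2\rVert_2\big|_{X_A(p)}$ is immediate, since $f_2/\lVert f_2\rVert_2\colon\mathcal{S}_A\to X_A(2)$ is smooth by Theorem \ref{funda.thm.}. To upgrade this continuous bijection to a homeomorphism, I would note that $X_A(p)$ is closed in the compact $L^p$-unit sphere of $\mathbb{C}^m$ and is therefore itself compact, while $X_A(2)$ is Hausdorff as a smooth manifold. A continuous bijection from a compact space onto a Hausdorff space is automatically a homeomorphism, so the inverse $f_p/\lVert f_p\rVert_p\big|_{X_A(2)}$ is continuous as well.

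I expect the main subtlety to be the bookkeeping around leaves under scaling: a leaf is not stable under dilation but is sent to a distinct leaf, so the equivariance $f_q(cz)=c\cdot f_q(z)$ must be invoked carefully each time one renormalizes in order to translate between $X_A(p)$ and $X_A(2)$. Once this is kept straight, the inverse computations collapse to a two-line calculation and the final homeomorphism follows from compactness.
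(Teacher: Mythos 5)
Your proof is correct and follows essentially the same route as the paper: both arguments reduce to the uniqueness of the $L^p$- and $L^2$-minima on each leaf (Proposition \ref{exi.uni.}) to get bijectivity, and then conclude with the observation that a continuous bijection from the compact space $X_A(p)$ to the Hausdorff space $X_A(2)$ is a homeomorphism. The only cosmetic difference is that you verify bijectivity by an explicit computation with the scaling equivariance $f_q(cz)=c\,f_q(z)$ and the fixed-point identity $f_q|_{X_A(q)}=\mathrm{id}$, whereas the paper phrases the same fact as each set $\Phi_A(\{z\}\times\mathbb{R}^d\times\mathbb{R}_{>0})$ meeting $X_A(p)$ and $X_A(2)$ exactly once.
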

\begin{proof}
 Consider the function 
  \[
\begindc{\commdiag}[15]
\obj(0,1)[aa]{$\Phi_A\co\mathcal{S}_A
  \times\mathbb{R}^d\times\mathbb{R}_{>0}$}
\obj(8,1)[bb]{$\mathcal{S}_A$}
\obj(0,0)[cc]{$(z,T,r)$}
\obj(8,0)[dd]{$r(z_ie^{\langle A_i,T \rangle})_{i=1}^m$,}
\mor{aa}{bb}{}
\mor{cc}{dd}{}[+1,6]
\enddc
  \]
from Theorem \ref{funda.thm.} and Proposition \ref{exi.uni.}, 
with given $z\in\mathcal{S}_A$, its image 
under $\Phi_A$ intersects both $X_A(p)$ and 
$X_A(2)$ exactly once, respectively,
hence $f_2/\lVert f_2\rVert_2\big|_{X_A(p)}$ is 
a bijection. Moreover, it is easy
to see that $X_A(p)$ is compact and $X_A(2)$
is Hausdorff; since a closed subspace of a compact space 
is compact, and a compact subspace of a Hausdorff space 
is closed, it follows that  
$f_2/\lVert f_2\rVert_2\big|_{X_A(p)}$
is closed 
and hence a homeomorphism by the bijectiveness.
As a conclusion, its inverse 
$f_p/\lVert f_p\rVert_p\big|_{X_A(2)}$ 
is continuous.
 \end{proof}
\begin{thm}\label{final.1.}
 The continuous function
\begin{equation*}
 \begindc{\commdiag}[15]
  \obj(0,1)[aa]{$\Phi_A(p)\co X_A(p)
    \times\mathbb{R}^d\times\mathbb{R}_{>0}$}
  \obj(8,1)[bb]{$\mathcal{S}_A$}
  \obj(0,0)[cc]{$(z,T,r)$}
  \obj(8,0)[dd]{$r(z_ie^{\langle A_i,T \rangle})_{i=1}^m$}
  \mor{aa}{bb}{}
  \mor{cc}{dd}{}[+1,6]
  \enddc
\end{equation*}
is a homeomorphism, for all $p\geq 1$. 	
\end{thm}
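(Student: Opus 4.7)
The plan is to reduce the case of general $p$ to the case $p=2$ of Theorem~\ref{funda.thm.} by producing a homeomorphism $G$ that reparameterises $X_A(p)\times\mathbb{R}^d\times\mathbb{R}_{>0}$ as $X_A(2)\times\mathbb{R}^d\times\mathbb{R}_{>0}$ and satisfies the identity $\Phi_A(p)=\Phi_A(2)\circ G$. The point is that $\Phi_A(p)$ and $\Phi_A(2)$ describe the same $\mathbb{R}^d\times\mathbb{R}_{>0}$-action on $\mathcal{S}_A$, only using different cross-sections ($X_A(p)$ versus $X_A(2)$).

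To construct $G$, let $h\co X_A(p)\to X_A(2)$ denote the restriction of $f_2/\lVert f_2\rVert_2$; by Proposition~\ref{conti.} this is a homeomorphism with inverse $f_p/\lVert f_p\rVert_p|_{X_A(2)}$. Recall that $T_2\co\mathcal{S}_A\to\mathbb{R}^d$ (defined in \eqref{Def.T2.}) and the positive function $r_0:=\lVert f_2\rVert_2^{-1}\co\mathcal{S}_A\to\mathbb{R}_{>0}$ are smooth. For $z\in X_A(p)\subset\mathcal{S}_A$, the identity $f_2(z)=F(z,T_2(z))$ together with the group law $F(F(w,S),T)=F(w,S+T)$ yields
\[
z \;=\; F(f_2(z),-T_2(z)) \;=\; \tfrac{1}{r_0(z)}\,F\bigl(h(z),-T_2(z)\bigr),
\]
so that, after another application of the group law,
\[
\Phi_A(p)(z,T,r) \;=\; rF(z,T) \;=\; \Phi_A(2)\bigl(h(z),\, T - T_2(z),\, r/r_0(z)\bigr).
\]
I would therefore define
\[
G\co X_A(p)\times\mathbb{R}^d\times\mathbb{R}_{>0} \longrightarrow X_A(2)\times\mathbb{R}^d\times\mathbb{R}_{>0},\quad G(z,T,r) := \bigl(h(z),\, T-T_2(z),\, r/r_0(z)\bigr).
\]

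Next I would check that $G$ is a homeomorphism. Continuity of each component is immediate from the continuity of $h$ on $X_A(p)$ and the smoothness of $T_2, r_0$ on $\mathcal{S}_A\supset X_A(p)$. A manifest continuous inverse is given by $(z',T',r')\mapsto\bigl(h^{-1}(z'),\,T'+T_2(h^{-1}(z')),\,r_0(h^{-1}(z'))\cdot r'\bigr)$, again using Proposition~\ref{conti.} for the continuity of $h^{-1}$ together with the smoothness of $T_2,r_0$.

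Finally, the factorisation $\Phi_A(p)=\Phi_A(2)\circ G$ combined with the fact that $\Phi_A(2)$ is a global diffeomorphism (Theorem~\ref{funda.thm.}) shows that $\Phi_A(p)$ is a homeomorphism. The only non-routine ingredient is the factorisation identity of the second step, which is a short computation but conceptually carries the whole content of the theorem: passing from the cross-section at $p=2$ to the cross-section at general $p$ just amounts to translating the $T$-coordinate by $T_2(z)$ and rescaling the $r$-coordinate by $r_0(z)^{-1}$, i.e.\ to sliding each leaf parameter along the foliation and its transverse scaling direction.
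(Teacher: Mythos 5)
Your proof is correct and is essentially the paper's argument: the paper constructs the same coordinate transition (in the direction $X_A(2)\times\mathbb{R}^d\times\mathbb{R}_{>0}\to X_A(p)\times\mathbb{R}^d\times\mathbb{R}_{>0}$, i.e.\ your $G^{-1}$) and obtains the inverse of $\Phi_A(p)$ as $\varphi\circ(\Phi_A(2))^{-1}$, relying on the same inputs (Theorem~\ref{funda.thm.} and Proposition~\ref{conti.}). Your factorisation $\Phi_A(p)=\Phi_A(2)\circ G$ is just the same identity read in the opposite direction.
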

\begin{proof}
It suffices to find a continuous inverse for $\Phi_A(p)$.
Suppose that 
$f_p(z)/\lVert f_p(z)\rVert_p=(x_i(z))_{i=1}^m$; 
  for $(z,T,r)\in X_A(2)
  \times\mathbb{R}^d\times\mathbb{R}_{>0}$, we can rewrite 
  \[
	z=\rho^{-1}(z)F( f_p(z)/\lVert f_p(z)\rVert_p,T_2(f_p(z)/\lVert f_p(z)\rVert_p))=
 \rho^{-1}(z)(x_i(z)e^{\langle A_i, T_2(f_p(z)/\lVert f_p(z)\rVert_p)\rangle})_{i=1}^m, 
 \]
 where $\rho(z)=\lVert(x_i(z)e^{\langle A_i, T_2(f_p(z)/\lVert f_p(z)\rVert_p)\rangle}
 )_{i=1}^m\rVert_2$.
The continuity of $\rho^{-1}(z)$, $x_i(z)$ 
and $e^{\langle A_i, T_2(f_p(z)/\lVert f_p(z)\rVert_p)\rangle}$ 
 follows from
 Proposition \ref{conti.} (by Theorem \ref{funda.thm.}, $T_2$ 
 is smooth). 
 Observe that 
	\begin{align*}
	 \Phi_A(2)(z,T,r)&=r(z_ie^{\langle A_i,T \rangle})_{i=1}^m
	=r\rho^{-1}(z)(x_i(z)e^{\langle A_i, T+T_2(f_p(z)/
	\lVert f_p(z)\rVert_p)\rangle})_{i=1}^m\\
	&=\Phi_A(p)(f_p(z)/\lVert f_p(z)\rVert_p,T+T_2(f_p(z)/
	\lVert f_p(z)\rVert_p),r\rho^{-1}(z)),
	\end{align*}
hence we have a coordinate transition function
  \begin{equation*}
   \begindc{\commdiag}[5]
   \obj(0,3)[aa]{$\varphi\co X_A(2)
	 \times\mathbb{R}^d\times\mathbb{R}_{>0}$}
   \obj(40,3)[bb]{$X_A(p)
	 \times\mathbb{R}^d\times\mathbb{R}_{>0}$}
   \obj(0,0)[cc]{$(z,T,r)$}
   \obj(40,0)[dd]{$(f_p(z)/\lVert f_p(z)\rVert_p,T+T_2(f_p(z)/
	                 \lVert f_p(z)\rVert_p),r\rho^{-1}(z))$.}
   \mor{aa}{bb}{}
   \mor{cc}{dd}{}[+1,6]
   \enddc
 \end{equation*}
 It is straightforward to check the continuity of $\varphi$, 
 thus $\varphi\circ(\Phi_A(2))^{-1}$ is the inverse of $\Phi_A(p)$.
\end{proof}
 \begin{cor}\label{solu.}The function
\begin{equation}
T_p\co\mathcal{S}_A\to \mathbb{R}^d, \text{ s.t. } f_p(z)=F(z,T_p(z)),
\label{Def.Tp.}
\end{equation}
is well defined and continuous.  
That is to say, for each $z\in\mathcal{S}_A$,
$T_p(z)$ is the unique solution of the equation
\[\sum_{i=1}^m A_i|z|_i^pe^{p\langle A_i, T \rangle}=\bm{0},
\]
which depends on $z$ continuously. 
\end{cor}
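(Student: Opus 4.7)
The plan is to deduce the corollary from Theorem \ref{final.1.} and Proposition \ref{exi.uni.} by identifying $T_p(z)$ with a coordinate of the inverse homeomorphism $\Phi_A(p)^{-1}$. First I would establish that $T_p$ is unambiguously defined by the equation $f_p(z)=F(z,T)$; then I would verify the critical-point characterization; and finally I would read off continuity from Theorem \ref{final.1.}.

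For well-definedness, existence of some $T\in\mathbb{R}^d$ with $f_p(z)=F(z,T)$ is immediate, because $f_p(z)\in L_z$ lies in the $F$-orbit of $z$ by Proposition \ref{exi.uni.}. For uniqueness I would invoke Lemma \ref{adm.lem.}: since $z\in\mathcal{S}_A$ means $\bm{0}\in\mathrm{conv}\,A(I_z)$, the augmented subtuple $\widetilde{A}(I_z)$ has rank $d+1$, so $A(I_z)$ itself spans $\mathbb{R}^d$. If $F(z,T_1)=F(z,T_2)$, then $e^{\langle A_i,T_1-T_2\rangle}=1$ for every $i\in I_z$, forcing $\langle A_i,T_1-T_2\rangle=0$ on a spanning set, hence $T_1=T_2$. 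In other words, the action $F$ is free on $\mathcal{S}_A$, which makes $T_p(z)$ single-valued.

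Differentiating $\lVert F(z,T)\rVert_p^p=\sum_i|z_i|^p e^{p\langle A_i,T\rangle}$ in $T$ yields the equation $\sum_i A_i|z_i|^p e^{p\langle A_i,T\rangle}=\bm{0}$, which $T_p(z)$ satisfies since $f_p(z)$ minimizes the $L^p$-norm along $L_z$. Uniqueness of this critical point was already established in Proposition \ref{exi.uni.} via strict convexity along lines; the argument there applies verbatim for any $p\geq 1$, so no new convexity work is needed.

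For continuity, given $w\in\mathcal{S}_A$, write $(z',T',r')=\Phi_A(p)^{-1}(w)$ with $z'\in X_A(p)$, so that $w=r'(z'_i e^{\langle A_i,T'\rangle})_{i=1}^m$. A direct substitution shows $F(w,-T')=r'z'$, and since $z'\in X_A(p)$ satisfies $\sum_i A_i|z'_i|^p=\bm{0}$, the tuple $r'z'$ solves the critical-point equation for $\lVert F(w,\cdot)\rVert_p^p$ at $T=-T'$. By the uniqueness just established this forces $f_p(w)=r'z'$ and $T_p(w)=-T'$, so $T_p$ is the negative of the middle coordinate of $\Phi_A(p)^{-1}$, which is continuous by Theorem \ref{final.1.}. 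The main obstacle I foresee is purely notational---tracking how the scaling $r'$ and the foliation parameter $T'$ combine in $\Phi_A(p)$ so as to recognize $-T'$ as $T_p(w)$---and once this identification is made, all the analytic content has already been absorbed into Theorem \ref{final.1.} and Proposition \ref{exi.uni.}.
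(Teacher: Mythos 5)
Your proposal is correct and follows essentially the route the paper intends: the paper states this as an immediate corollary of Theorem \ref{final.1.} and later remarks that $T_p$ is (the projection onto $\mathbb{R}^d$ of) $\Phi_A^{-1}(p)$, with well-definedness coming from Proposition \ref{exi.uni.} and the freeness of the action via Lemma \ref{adm.lem.}. You are in fact slightly more careful than the paper, both in spelling out the freeness argument and in tracking the sign $T_p(w)=-T'$, which the paper's later informal remark glosses over.
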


\section{When $p$ tends to infinity}\label{sec.2.}
In this section we treat $T_p(z)$ and $f_p(z)$ that are 
defined in Corollary \ref{solu.} and Proposition \ref{exi.uni.} respectively
as functions of $p\in[1,\infty)$, with 
$z\in\mathcal{S}_A$ fixed. 
\begin{lem}\label{bound.lem.}
	There exists a bound $N(z)$, such that 
	$\lVert T_p(z)\rVert_2<N(z)$ for all $p\in[1,\infty)$.
\end{lem}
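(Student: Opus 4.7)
My plan is to exploit that $T_p(z)$ minimizes the strictly convex function
\[g_p(T) := \bigl(\lVert F(z,T)\rVert_p\bigr)^p = \sum_{i\in I_z} |z_i|^p e^{p\langle A_i, T\rangle},\]
and combine the resulting inequality $g_p(T_p(z))\leq g_p(\bm{0})$ with both the Siegel and weak hyperbolicity conditions on $A$.

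First I would extract, uniformly in $p\geq 1$, an upper bound on each $\langle A_i, T_p(z)\rangle$. Since every term in the defining sum of $g_p$ is non-negative and $g_p(T_p(z))\leq g_p(\bm{0})=\sum_{j\in I_z}|z_j|^p$, the $i$-th summand alone satisfies $|z_i|^p e^{p\langle A_i, T_p(z)\rangle}\leq \sum_{j\in I_z}|z_j|^p$. Taking logarithms and dividing by $p$ yields
\[\langle A_i, T_p(z)\rangle \leq \frac{\log |I_z|}{p}+\log\max_{j\in I_z}\frac{|z_j|}{|z_i|}\leq M_i(z),\]
where $M_i(z)$ is a finite constant independent of $p\geq 1$.

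Next I would translate the $|I_z|$-tuple of one-sided bounds into a bound on the Euclidean norm of $T_p(z)$. If I can show that $\bm{0}$ lies in the \emph{interior} of $\mathrm{conv}\,A(I_z)$, then for every unit vector $v\in S^{d-1}\subset\mathbb{R}^d$ the quantity $\mu(v):=\max_{i\in I_z}\langle A_i, v\rangle$ is strictly positive; by continuity and compactness of $S^{d-1}$, $\mu_0:=\min_{v\in S^{d-1}}\mu(v)>0$. Applying this with $v=T_p(z)/\lVert T_p(z)\rVert_2$ (the case $T_p(z)=\bm{0}$ being trivial) gives
\[\mu_0\lVert T_p(z)\rVert_2\leq \max_{i\in I_z}\langle A_i, T_p(z)\rangle\leq \max_{i\in I_z}M_i(z),\]
which is the desired bound $N(z)$.

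The substantive step is therefore the interior-point property, and this is where both admissibility conditions $*_1$ and $*_2$ are used. If $\bm{0}$ lay on the boundary of $\mathrm{conv}\,A(I_z)$, there would exist a supporting hyperplane $v^\perp$ with $\langle A_i, v\rangle\leq 0$ for all $i\in I_z$; by Lemma \ref{adm.lem.} the sub-tuple $A(I_z)$ has rank $d$, hence at least one such inequality is strict. The Siegel decomposition $\bm{0}=\sum_i\lambda_i A_i$ (with $\lambda_i\geq 0$, $\sum\lambda_i=1$) then forces $\lambda_i=0$ whenever $\langle A_i, v\rangle<0$, so $\bm{0}$ is already a convex combination of the sub-tuple $A(J)$ indexed by $J=\{i\in I_z:\langle A_i, v\rangle=0\}$, which lies inside the $(d-1)$-dimensional subspace $v^\perp$. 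Carath\'{e}odory's theorem applied inside $v^\perp$ then produces $J'\subset J$ with $|J'|\leq d$ and $\bm{0}\in\mathrm{conv}\,A(J')$, contradicting the weak hyperbolicity condition $*_2$. This interior-point argument is the crux; once it is in hand the remainder is just compactness and an elementary estimate.
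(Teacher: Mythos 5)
Your proof is correct, and it takes a genuinely different route from the paper's. The paper argues by contradiction: it compares the minimal value $\lVert F(z,T_{p}(z))\rVert_{p}$ with $\lVert F(z,T_1(z))\rVert_{p}$ via H\"older's inequality \eqref{Hold.} and invokes Lemma \ref{Conc.Lem.} (coercivity of $T\mapsto\lVert F(z,T)\rVert_2$, which itself rests on the $p=2$ theory of Theorem \ref{funda.thm.}) to conclude that an unbounded sequence $T_{p_k}(z)$ would violate minimality; no explicit bound is produced. You instead test the minimizer against $T=\bm{0}$, extract the uniform one-sided estimates $\langle A_i,T_p(z)\rangle\leq M_i(z)$ for $i\in I_z$, and convert these into a two-sided norm bound using the fact that $\bm{0}$ lies in the interior of $\mathrm{conv}A(I_z)$ --- which you correctly derive from $*_1$, $*_2$ and Lemma \ref{adm.lem.} via a supporting-hyperplane plus Carath\'eodory argument (this interior-point property is the same fact the paper uses later, e.g.\ in the proof of Lemma \ref{key.lem.}). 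Your approach buys an explicit, quantitative bound $N(z)=\mu_0^{-1}\max_{i\in I_z}M_i(z)$ that is manifestly uniform in $p\geq1$ and is independent of Lemma \ref{Conc.Lem.} and of the norm-comparison inequalities; the paper's approach is shorter given the machinery already in place but is non-constructive and, as written, tacitly assumes the offending sequence $p_k$ tends to infinity. All steps in your argument check out, including the sign observation $M_i(z)\geq 0$ needed for the final inequality and the degenerate case $T_p(z)=\bm{0}$.
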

\begin{proof}
By definition, $\lVert {F}(z,T_p(z))\rVert_p$ is 
the unique minimum in the leaf $L_z$. 
Suppose that on the contrary, 
there exists a sequence $\{p_k\}_{k=1}^{\infty}$ tending to 
infinity, such that $\lVert T_{p_k}(z)\rVert_2>k$, for all 
$k$. 
First by Lemma \ref{Conc.Lem.} and Cauchy-Schwarz
inequality \eqref{Cauc.Schw.}, $\lVert F(z,T)\rVert_1$
becomes arbitrary large whenever $\lVert T\rVert_2$ is 
large enough, thus 
\[\exists N>0, \ s.t.\ \forall k>N, \ 
 m \lVert F(z,T_1(z))\rVert_1<\lVert F(z,T_{p_k}(z))\rVert_1.
  \]
Then by H\"{o}lder's inequality \eqref{Hold.}, we have
\[
  \sqrt[q_k]{m}\lVert F(z,T_1(z))\rVert_{p_k}\leq
  \sqrt[q_k]{m}\lVert F(z,T_1(z))\rVert_1<\lVert F(z,T_{p_k}(z))\rVert_1
  \leq\sqrt[q_k]{m}\lVert F(z,T_{p_k}(z))\rVert_{p_k}
  \]
 where $1/p_k+1/q_k=1$, it follows that 
 $\lVert F(z,T_{p_k}(z))\rVert_{p_k}$ is strictly greater than 
 $\lVert F(z,T_1(z))\rVert_{p_k}$, 
 yielding a contradiction.
\end{proof}

\begin{prop}\label{conti.Tp.}
 $T_p(z)$ is continuous	for all $p\in[1,\infty)$.
\end{prop}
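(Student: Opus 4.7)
The plan is a standard two-step argument: use Lemma~\ref{bound.lem.} to extract convergent subsequences, and then identify the limit via uniqueness of the minimizer. Fix $p_0\in[1,\infty)$ and let $p_n\to p_0$ in $[1,\infty)$. By Lemma~\ref{bound.lem.} the sequence $\{T_{p_n}(z)\}$ lies in the closed Euclidean ball of radius $N(z)$ in $\mathbb{R}^d$, so it is relatively compact. To conclude that $T_{p_n}(z)\to T_{p_0}(z)$, it therefore suffices to show that every convergent subsequence has limit equal to $T_{p_0}(z)$.

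Suppose $T_{p_{n_k}}(z)\to T^*$. I would pass to the limit using the minimization characterization rather than the critical point equation \eqref{diff.mini.}, since it avoids any division/rearrangement and only requires joint continuity of the energy. For every $T\in\mathbb{R}^d$ and every $k$,
\[
\sum_{i=1}^m |z_i|^{p_{n_k}} e^{p_{n_k}\langle A_i,\,T_{p_{n_k}}(z)\rangle}
\;\le\;
\sum_{i=1}^m |z_i|^{p_{n_k}} e^{p_{n_k}\langle A_i,\,T\rangle}.
\]
The map $(p,T)\mapsto \sum_i |z_i|^p e^{p\langle A_i,T\rangle}$ on $[1,\infty)\times\mathbb{R}^d$ is jointly continuous (each summand is, including when $z_i=0$), so letting $k\to\infty$ yields
\[
\sum_{i=1}^m |z_i|^{p_0} e^{p_0\langle A_i,\,T^*\rangle}
\;\le\;
\sum_{i=1}^m |z_i|^{p_0} e^{p_0\langle A_i,\,T\rangle}
\]
for every $T\in\mathbb{R}^d$. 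Hence $T^*$ is a global minimizer of $\lVert F(z,\cdot)\rVert_{p_0}^{p_0}$, and by the uniqueness portion of Proposition~\ref{exi.uni.} we obtain $T^*=T_{p_0}(z)$. This forces the whole sequence to converge to $T_{p_0}(z)$, establishing continuity at $p_0$.

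The main obstacle is ensuring that the subsequence argument does not collapse, i.e.~that $T_{p_n}(z)$ cannot escape to infinity as $p_n\to p_0$; this is precisely what Lemma~\ref{bound.lem.} rules out, and it is the only nontrivial input. The passage to the limit in the minimization inequality is purely a soft continuity argument, and the identification of the limit with $T_{p_0}(z)$ is immediate from the uniqueness already proven in Proposition~\ref{exi.uni.}.
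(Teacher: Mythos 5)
Your proof is correct and follows essentially the same route as the paper: boundedness of $\{T_{p_n}(z)\}$ from Lemma~\ref{bound.lem.}, extraction of a convergent subsequence, passage to the limit by joint continuity in $(p,T)$, and identification of the limit via uniqueness. The only (harmless) difference is that you pass to the limit in the minimization inequality and invoke uniqueness of the minimizer from Proposition~\ref{exi.uni.}, whereas the paper passes to the limit in the critical-point equation $\sum_{i=1}^m A_i|z_i|^{p}e^{p\langle A_i,T\rangle}=\bm{0}$ and invokes uniqueness of its solution (Corollary~\ref{solu.}).
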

\begin{proof}
Suppose again on the contrary, there is a sequence 
$\{p_k\}_{k=1}^{\infty}$ with $\lim_{k}p_k=p_0$,
but $\lVert T_{p_k}(z)-T_{p_0}(z)\rVert_2\geq \delta$, for some
$\delta>0$. Without loss of generality we may assume that 
$\lim_{k}T_{p_k}=T_0\not=T_{p_0}(z)$, or we can choose a subsequence 
satisfying the property, by the lemma above. 
Consider the smooth function
  \begin{equation*}
    \begindc{\commdiag}[5]
    \obj(0,3)[aa]{$\mu\co[1,\infty)\times\mathbb{R}^d$}
 	 \obj(20,3)[bb]{$\mathbb{R}^d$}
    \obj(0,0)[cc]{$(p,T)$}
    \obj(20,0)[dd]{$\sum_{i=1}^mA_i|z_i|^{p}e^{p\langle A_i, T\rangle}$,}
    \mor{aa}{bb}{}
    \mor{cc}{dd}{}[+1,6]
    \enddc
  \end{equation*}
we have $\bm{0}=\lim_{k}\mu(p_k,T_k(z))=\mu(p_0,T_0)$ by continuity, contradicting
to the uniqueness (see Corollary \ref{solu.}). 
\end{proof}
\begin{cor}\label{To.The.Sphere.}
As a function of $p\in[1,\infty)$, 
$f_p(z)/\lVert f_p(z)\rVert_p$ is continuous with its image 
in the $L^{p}$-link $X_A(p)$ (defined by \eqref{Def.X.}), and we have
\begin{equation}
  \lim_{p\to\infty}\lVert f_p(z)/\lVert f_p(z) \rVert_p\rVert_{\infty}=1.
  \label{Appro.}
\end{equation}
\end{cor}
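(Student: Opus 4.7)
The plan is to deduce both assertions directly from results already established in Section~\ref{sec.2.}, together with the standard comparison $\|\cdot\|_{\infty}\le\|\cdot\|_p\le m^{1/p}\|\cdot\|_{\infty}$.

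For the continuity statement, I would first observe that $f_p(z)=F(z,T_p(z))$ by \eqref{Def.Tp.}. The action $F$ is smooth in its arguments and depends continuously on $p$ only through $T_p(z)$, and $T_p(z)$ is continuous in $p$ by Proposition~\ref{conti.Tp.}. Hence $f_p(z)$ is a continuous function of $p\in[1,\infty)$ valued in $\mathbb{C}^m$. The scalar $\lVert f_p(z)\rVert_p=\bigl(\sum_{i=1}^m|z_i|^{p}e^{p\langle A_i,T_p(z)\rangle}\bigr)^{1/p}$ is then a continuous function of $p$, and is strictly positive by Proposition~\ref{exi.uni.}, so the quotient $f_p(z)/\lVert f_p(z)\rVert_p$ is continuous in $p$. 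To verify the image lies in $X_A(p)$, I would use the $p$-analogue of the critical point equation \eqref{diff.mini.}: differentiating $\lVert F(z,T)\rVert_p^{p}$ with respect to $T$ and setting $T=T_p(z)$ gives $\sum_{i=1}^{m}A_i|z_i|^p e^{p\langle A_i,T_p(z)\rangle}=\bm{0}$, i.e.\ $\sum_{i=1}^{m}A_i|f_p(z)_i|^{p}=\bm{0}$. Normalizing by $\lVert f_p(z)\rVert_p^p$ preserves this linear identity and forces the $L^p$-norm to equal $1$, so $f_p(z)/\lVert f_p(z)\rVert_p$ satisfies both equations of \eqref{Def.X.}.

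For \eqref{Appro.}, write $w_p:=f_p(z)/\lVert f_p(z)\rVert_p\in X_A(p)$, so $\lVert w_p\rVert_p=1$. The standard inequality between norms on $\mathbb{C}^m$ yields
\begin{equation*}
\lVert w_p\rVert_{\infty}\le \lVert w_p\rVert_p = 1 \le m^{1/p}\lVert w_p\rVert_{\infty},
\end{equation*}
so $m^{-1/p}\le \lVert w_p\rVert_{\infty}\le 1$. Since $m^{-1/p}\to 1$ as $p\to\infty$, the squeeze principle gives $\lim_{p\to\infty}\lVert w_p\rVert_{\infty}=1$.

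The only mildly delicate point is keeping the dependence on $p$ straight in the continuity argument (one must verify that $\lVert f_p(z)\rVert_p$ stays bounded away from $0$ on each compact interval of $p$-values so that the normalization is uniformly safe), but this is immediate from combining Proposition~\ref{exi.uni.} with the continuity of $T_p(z)$ and Lemma~\ref{bound.lem.}. No new estimates beyond the norm comparison are needed.
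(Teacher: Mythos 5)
Your proposal is correct and follows essentially the same route as the paper: the limit \eqref{Appro.} is obtained there by writing $1=\lVert y(p)\rVert_p=\lVert y(p)\rVert_{\infty}\bigl(\sum_{i=1}^m|y_i(p)/\lVert y(p)\rVert_{\infty}|^{p}\bigr)^{1/p}$ with the bracketed sum at most $m$, which is exactly your squeeze $m^{-1/p}\le\lVert w_p\rVert_{\infty}\le 1$. The continuity and membership in $X_A(p)$, which you spell out via Proposition~\ref{conti.Tp.} and the critical-point equation of Corollary~\ref{solu.}, are left implicit in the paper but your justification is the intended one.
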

\begin{proof}
Denote $f_p(z)/\lVert f_p(z) \rVert_p$ by $y(p)=(y_i(p))_{i=1}^{m}$. 
Observe that 
\[1=\lVert y(p) \rVert_p=\lVert y(p)\rVert_{\infty}
  \left( \sum_{i=1}^m\Big|\frac{y_i(p)}{\lVert y(p)
    \rVert_{\infty}}\Big|^{p} \right)^{\frac{1}{p}},\]
where the last term in the bracket does not exceed $m$,
thus \eqref{Appro.} holds as desired.
\end{proof}
     
\subsection{Moment-angle complexes}\label{S.M.A.C.}
Let $K_A$ be the simplicial complex 
defined by \eqref{Def.KA.}. 
The associated moment-angle complex $(D^2,S^1)^{K_A}$ is 
defined as the polyhedral product
\begin{equation}
	 (D^2,S^1)^{K_A}=\bigcup_{\sigma\in K_{A}}D(\sigma); \ D(\sigma)=\prod_{i=1}^mY_i, 
  \  Y_i=\begin{cases}D^2=\{|z|\leq 1\mid z\in\mathbb{C}\} & \text{ if $i\in \sigma$,}\\
	S^1=\{|z|=1\mid z\in\mathbb{C}\}  &  \text{otherwise.}
  \end{cases}\label{M.A.C.}
\end{equation}

The proposition below implies 
that $f_p(z)/\lVert f_p(z) \rVert_p\in X_A(p)$
approaches $(D^2,S^1)^{K_A}$ 
as $p$ tends to infinity.
\begin{prop}\label{go.out.}
Let $S_{\infty}$ be the unit sphere 
of $\mathbb{C}^m$ with respect to the 
$L^{\infty}$-norm, and let $z\in\mathcal{S}_A$ be 
a given point.
Then for every point
$z'=(z_i')_{i=1}^m\in S_{\infty}\cap\mathcal{S}_{A}\setminus(D^2,S^1)^{K_A}$, 
$f_p(z)/\lVert f_p(z)\rVert_p$ will go
outside of the set
\[
  C(z')=\{(z_i)_{i=1}^m\in\mathbb{C}^m\mid 
	|z_i|\leq |z_i'|,\forall i=1,2,\dots,m\},
\]
whenever $p$ is sufficiently large.
\end{prop}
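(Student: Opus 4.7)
The plan is to reduce the statement to a combinatorial property of $K_A$ and then exploit the critical equation satisfied by $y(p):=f_p(z)/\lVert f_p(z)\rVert_p$ together with compactness of the standard simplex.

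First I would unpack what it means for $z'\in S_\infty\cap\mathcal{S}_A$ to lie \emph{outside} $(D^2,S^1)^{K_A}$. Let $I:=\{i\in[m]:|z_i'|=1\}$ and $J:=[m]\setminus I=\{i\in[m]:|z_i'|<1\}$; $I$ is nonempty because $z'\in S_\infty$ and every $|z_i'|\leq 1$. Inspecting \eqref{M.A.C.}, $z'\in D(\sigma)$ is equivalent to $J\subseteq\sigma$, so $z'\in (D^2,S^1)^{K_A}$ iff $J\in K_A$ (using that $K_A$ is closed under passage to subsets). By \eqref{Def.KA.} the hypothesis $z'\notin(D^2,S^1)^{K_A}$ thus translates into the combinatorial condition
\[
\bm{0}\notin \mathrm{conv}\,A(I).
\]

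Now I would argue by contradiction. Assume there exists a sequence $p_k\to\infty$ with $y(p_k)\in C(z')$ for every $k$. Combining Corollary \ref{solu.} with the identity $|f_p(z)_i|^p=|z_i|^p e^{p\langle A_i,T_p(z)\rangle}$, after normalization the point $y(p)\in X_A(p)$ satisfies
\[
\sum_{i=1}^m A_i\,|y_i(p)|^{p}=\bm{0},\qquad \sum_{i=1}^m|y_i(p)|^{p}=1.
\]
The hypothesis $y(p_k)\in C(z')$ gives $|y_i(p_k)|\leq |z_i'|$, so $|y_i(p_k)|^{p_k}\to 0$ for every $i\in J$ (trivially when $|z_i'|=0$, and because $|z_i'|<1$ otherwise). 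Setting $\alpha_k:=\sum_{i\in I}|y_i(p_k)|^{p_k}$, the second identity forces $\alpha_k\to 1$, so for $k$ large the nonnegative numbers $\lambda_i(k):=|y_i(p_k)|^{p_k}/\alpha_k$, $i\in I$, are well defined and sum to $1$.

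The conclusion follows by a standard compactness passage. From the first identity,
\[
\sum_{i\in I}\lambda_i(k)\,A_i=-\frac{1}{\alpha_k}\sum_{j\in J}A_j\,|y_j(p_k)|^{p_k}\longrightarrow\bm{0},
\]
since the $A_j$ are finitely many fixed vectors and every coefficient on the right tends to $0$. The weights $(\lambda_i(k))_{i\in I}$ lie in the compact standard simplex, so along a subsequence they converge to some $(\lambda_i^*)_{i\in I}$ with $\lambda_i^*\geq 0$, $\sum\lambda_i^*=1$ and $\sum_{i\in I}\lambda_i^* A_i=\bm{0}$, contradicting $\bm{0}\notin\mathrm{conv}\,A(I)$. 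The only step demanding real care is the combinatorial translation in the first paragraph; everything else is a direct limit argument extracted from the critical point equation, and I do not foresee any substantial obstacle.
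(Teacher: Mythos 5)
Your proof is correct and follows essentially the same strategy as the paper's: both derive a contradiction from the fact that $y(p_k)\in X_A(p_k)$ forces $\sum_i A_i|y_i(p_k)|^{p_k}=\bm{0}$ with weights summing to $1$, while the weights on the indices where $|z_i'|<1$ vanish as $p_k\to\infty$, so the sum is driven toward a convex hull $\mathrm{conv}A(I)$ not containing $\bm{0}$. The only cosmetic difference is that the paper packages the final step via a neighborhood $U_B$ of the compact union $B$ of all ``bad'' convex hulls (after first extracting a convergent subsequence $x_k\to x_0$ in $C(z')$), whereas you pass directly to a subsequential limit of the normalized weights in the simplex over the fixed index set $I$ determined by $z'$; both are valid.
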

\begin{proof}
Denote by $B\subset\mathbb{R}^d$ the union of 
all convex hulls of the form $\mathrm{conv}A([m]\setminus\tau)$ with $\tau\subset [m]$
not contained in $K_A$ (in other words, 
$\bm{0}\not\in\mathrm{conv}A([m]\setminus\tau)$). 
It is clear that $B$ is empty when and only when
$K_A$ bounds the $(m-1)$-simplex (i.e.~$K_A=2^{[m]}\setminus[m]$, this 
happens only when $d=0$, by the admissibility of $A$), 
which means $S_{\infty}=(D^2,S^1)^{K_A}$ 
and we have nothing to prove;
otherwise $B$ is compact
thus there is an open neighborhood $U_B$ such that $\bm{0}\not\in U_B$.

Suppose the contrary, namely there is a 
sequence $\{p_k\}_{k=1}^{\infty}$ tending to infinity, 
such that $x_k=(x_{ki})_{i=1}^m=
f_{p_k}(z)/\lVert f_{p_k}(z)\rVert_p\in C(z')$. Since
$C(z')$ is compact, we may assume that $\{x_k\}_{k=1}^{\infty}$ converges 
to a point $x_0=(x_{0i})_{i=1}^m\in C(z')$, without loss of generality. 

We claim that the vector 
 \begin{equation}
   \sum_{i=1}^mA_i|x_{ki}|^{p_k}
   \label{conv.zero.}
 \end{equation}
lies in $U_B$ whenever $k$ is large enough. Notice that 
this will be a contradiction since $x_k\in X_{A}(p)$, whose definition
implies that the vector above should always be zero.

To see this, first note that because 
$x_0\not\in (D^2,S^1)^{K_A}$, there exists $\tau\not\in K_A$, 
such that $|x_{0i}|<\delta<1$ for all
$i\in \tau$. This means for those $i\in \tau$, 
there exists an $N>0$, such that  
$|x_{ki}|<\delta<1$ holds when $k>N$; thus 
for any given $\varepsilon> 0$, 
we can find $N_{\varepsilon}>N$, such that 
$|x_{ki}|^{p_k}<\varepsilon$ for all $k> N_{\varepsilon}$. 
It is not difficult to see that, 
if $\varepsilon$ is small enough, 
vector \eqref{conv.zero.} shall lie in $U_B$, 
as claimed. 
\end{proof}

\section{The convergence}\label{sec.3.}
In this section we shall prove that the function 
$f_p(z)/\lVert f_p(z)\rVert\co\mathcal{S}_A\to X_A(p)$
indeed converges to a point in $(D^2,S^1)^{K_A}$, 
as one may expect from Proposition \ref{go.out.}, with an additional
assumption that $A$ is centered at the origin (see \eqref{cent.orig.}).
The main technique we use here is combinatorial, in which 
Gale transforms play an essential role.\footnote{I would
like to thank the referee for pointing out that analogues of 
Lemma \ref{key.lem.} and Proposition \ref{Lem.Of.Basis.} are already proven
in \cite{MV04} and \cite{BM06}, where Gale transforms have 
been intensely used. The approach here is motivated by 
those in these works.}

Suppose $V=(V_1,V_2,\dots,V_m)$ is a tuple of vectors in 
$\mathbb{R}^{m-d-1}$, such that the affine dimension 
of $V$ is $m-d-1$, i.e. the matrix with columns 
$(V_i^{\mathrm{T}},1)^{\mathrm{T}}$ ($i=1,2,\dots,m$) 
has rank $m-d$.

Denote by $A_{V}=(A_1,A_2,\dots,A_m)$ the \emph{Gale transform} of $V$ 
(cf.~\cite[Chapter 5.4, pp.~85--86]{Gru03}), which is the transpose 
of a basis of solutions 
of the following linear system 
\begin{equation}
 \begin{cases}
   \sum_{i=1}^mV_ix_i=0,\\
   \sum_{i=1}^mx_i=0.
 \end{cases}\label{Gale.Trans.}
\end{equation}
It is clear that each $A_i$ is a vector in $\mathbb{R}^{d}$ and
different choices of $A_V$ are linearly equivalent.

Recall that for any $J \subset [m]$, the subtuple $V(J)=(V_i)_{i\in J}$ 
is a \emph{face} of $V$, if the intersection of $\mathrm{conv}V([m]\setminus J)$ with 
the affine space spanned by vectors in $V(J)$ is empty 
(cf.~\cite[Chapter 5.4, p.~88]{Gru03}). For instance, 
if $V$ consists of the vertices of a convex polytope $P$, then $V(J)$ is 
a face of $V$ when and only when $\mathrm{conv}V(J)$ is a face of $P$.
 Now we need two facts about Gale transforms:
 \begin{prop}[cf.~{\cite[Chapter 5.4, p.~88]{Gru03}}]\label{Funda.Gale.}
Let $V=(V_i)_{i=1}^m$ be a tuple of vectors in $\mathbb{R}^{m-d-1}$, 
  whose affine dimension is $m-d-1$, and let $A_V=(A_i)_{i=1}^m$ be its
  Gale transform.
  
Then for any 
$I\subset [m]$, $\mathrm{conv}V([m]\setminus I)$ is a face of 
$V$ if and only if either $I$ is empty or $\bm{0}$ is in the relative interior of 
$\mathrm{conv}A_{V}(I)$. 

Moreover, $V$ coincides with the vertex set
 of a convex polytope $P$ if and only if either 
\begin{enumerate}
 \item [(i)] $d=0$ (thus $P$ is a simplex) or 
 \item [(ii)] for every open halfspace $H^{+}$ of $\mathbb{R}^d$ containing
  $\bm{0}$ in its closure, we have 
  $\mathrm{card}(\{i\mid A_i\in H^{+}\})\geq 2$.
\end{enumerate}
\end{prop}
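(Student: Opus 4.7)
The plan is to translate both assertions into statements about the orthogonal complement of the solution space defining the Gale transform, which is precisely the row span of the augmented matrix $M$ whose columns are $(V_i^{\mathrm{T}},1)^{\mathrm{T}}$. The core observation I would use is that a linear relation $\sum_{i=1}^m c_i A_i = \bm{0}$ in $\mathbb{R}^d$ holds if and only if the vector $c=(c_i)_{i=1}^m\in\mathbb{R}^m$ lies in the row span of $M$; equivalently, $c_i = \langle w, V_i\rangle + \mu$ for some $w \in \mathbb{R}^{m-d-1}$ and some $\mu \in \mathbb{R}$. This is a direct consequence of the definition of $A_V$ as a basis of $\ker M$.

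For the first assertion, I would reformulate the face condition as a separation property: $\mathrm{conv}V([m]\setminus I)$ is a face exactly when there exists an affine functional $\ell(x) = \langle w, x\rangle + \mu$ vanishing on $V([m]\setminus I)$ and strictly positive on $V(I)$. Combining this with the core observation above yields the equivalence with the existence of coefficients $c_i>0$ for $i\in I$ and $c_i=0$ for $i\notin I$ satisfying $\sum_i c_i A_i=\bm{0}$, and after normalization this is precisely the statement that $\bm{0}$ lies in the relative interior of $\mathrm{conv}A_V(I)$. The trivial case $I=\emptyset$ is handled separately, since $V$ is a face of itself.

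For the second assertion, each $V_{i_0}$ is a vertex of $\mathrm{conv}V$ if and only if $\{V_{i_0}\}$ is a face, which by Part~1 is equivalent to $\bm{0}$ lying in the relative interior of $\mathrm{conv}A_V([m]\setminus\{i_0\})$ for every $i_0\in[m]$. When $d=0$ each $A_i$ is the zero vector in $\mathbb{R}^0$ and this holds vacuously, giving case~(i). For $d>0$, I would deduce the equivalence with~(ii) by separation: if some open halfspace $H^{+}$ through $\bm{0}$ contains at most the single vector $A_{i_0}$, then the remaining vectors lie in the closed complementary halfspace, so $\bm{0}$ fails to sit in the relative interior of their convex hull; conversely, if $\bm{0}$ is not in the relative interior of $\mathrm{conv}A_V([m]\setminus\{i_0\})$ for some $i_0$, a standard hyperplane-separation argument produces an open halfspace through $\bm{0}$ meeting the configuration $A_V$ in at most the single vector $A_{i_0}$.

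The main technical obstacle is the relative-interior versus boundary distinction on both sides of the duality: one must arrange the separation so that strict positivity of the $c_i$ (not merely non-negativity) corresponds exactly to membership in the relative interior, and in Part~2 the separating halfspace must be produced open rather than closed. I would handle this in Part~1 by a Farkas-type lemma applied to the supporting face and in Part~2 by replacing a closed supporting hyperplane with the open half-space on its positive side, together with the rank assumption on $V$ that guarantees the affine hull of $A_V$ is all of $\mathbb{R}^d$.
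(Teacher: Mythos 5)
The paper offers no proof of this proposition---it is quoted directly from \cite{Gru03}---so there is nothing internal to compare against; your write-up is the standard textbook proof of Gale duality, and its overall structure is correct. The dictionary you set up (linear relations $\sum_i c_iA_i=\bm{0}$ correspond exactly to vectors $c$ in the row span of the augmented matrix $M$, i.e.\ to restrictions of affine functionals to $V$) is the right one, the paper's face condition $\mathrm{conv}V(I)\cap\mathrm{aff}V([m]\setminus I)=\emptyset$ is indeed equivalent to the existence of an affine functional vanishing on $V([m]\setminus I)$ and strictly positive on $V(I)$, and the characterization of the relative interior of the hull of a finite set by strictly positive convex combinations completes Part~1.

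One inference in Part~2 is stated too quickly: from ``the remaining vectors lie in the closed complementary halfspace'' you conclude that $\bm{0}$ is not in the relative interior of their convex hull, but this does not follow by itself when the halfspace is bounded by a hyperplane through $\bm{0}$, since all of $A_V([m]\setminus\{i_0\})$ could a priori lie \emph{on} that hyperplane, in which case $\bm{0}$ may well be in the relative interior of their hull. The repair is short and uses a fact you never state explicitly but which follows from your own core observation: every row of the Gale matrix sums to zero because $(1,\dots,1)$ is a row of $M$, i.e.\ $\sum_{i=1}^mA_i=\bm{0}$. If $\bm{0}=\sum_{i\neq i_0}\lambda_iA_i$ with all $\lambda_i>0$ and all $\langle u,A_i\rangle\le 0$, then every $\langle u,A_i\rangle=0$ for $i\neq i_0$, and the zero-sum relation forces $\langle u,A_{i_0}\rangle=0$ as well, contradicting $A_{i_0}\in H^{+}$; if $H^{+}$ contains none of the $A_i$, the same relation plus the fact that $A_V$ has rank $d$ (so the $A_i$ cannot all lie on a hyperplane through $\bm{0}$ when $d>0$) gives the contradiction. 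Since you explicitly identify the relative-interior/boundary distinction as the crux and name the rank hypothesis as the tool, this is a local fix rather than a structural flaw; with it, the argument is complete.
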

It follows that if $V$ is centered at the
origin, then the double Gale transform of $V$ gives the 
same configuration in $\mathbb{R}^{m-d-1}$. However, 
this is not true in general (see Remark \ref{rem:counter}). 

Based on the facts above, we have the following lemma (in which
we use the same notations with those in Proposition \ref{Funda.Gale.}).  
\begin{lem}\label{key.lem.}
Suppose that every vector of $V$ is a face, and
every face of $V$ has at most $m-d-1$ vectors. 
Then  $V$ coincides with the vertex set of 
a convex polytope $P$, whose boundary is simplicial.  
\end{lem}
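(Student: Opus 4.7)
The plan is to exploit Proposition \ref{Funda.Gale.} to translate the two hypotheses into concrete statements about the Gale transform $A_V=(A_1,\ldots,A_m)$ of $V$. The hypothesis that each $V_i$ is a face becomes $\bm{0}\in\mathrm{relint}\,\mathrm{conv}A_V([m]\setminus\{i\})$ for every $i\in[m]$, while the hypothesis that every face of $V$ has at most $m-d-1$ vectors becomes: for every nonempty $I\subset[m]$ with $\bm{0}\in\mathrm{relint}\,\mathrm{conv}A_V(I)$, one has $|I|\geq d+1$. I also record that $A_V$ has column rank $d$, from its construction as a basis of solutions of the rank-$(m-d)$ linear system \eqref{Gale.Trans.}.

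My first step is to verify condition (ii) of Proposition \ref{Funda.Gale.} (the case $d=0$ falls into case (i), where $P$ is automatically a simplex, so assume $d\geq 1$). Given an open halfspace $H^{+}\subset\mathbb{R}^d$ with $\bm{0}\in\overline{H^{+}}$, I argue by contradiction that fewer than two $A_i$'s lie in $H^{+}$ is impossible. The pivotal convex-geometric observation is: if $C$ is a convex set contained in $\overline{H^{-}}$ (the opposite closed halfspace) with $\bm{0}\in\mathrm{relint}\,C$, then $\mathrm{aff}(C)\subset\partial H^{+}$; indeed, a small ball in $\mathrm{aff}(C)$ around $\bm{0}$ lies in $C\subset\overline{H^{-}}$, forcing $\mathrm{aff}(C)\subset\overline{H^{-}}$, and an affine subspace contained in a closed halfspace must lie in its bounding hyperplane. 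If no $A_i$ is in $H^{+}$, applying this observation to $\mathrm{conv}A_V([m]\setminus\{i\})$ for every $i$ places each $A_j$ in $\partial H^{+}$, contradicting the column rank $d$ of $A_V$. If exactly one $A_i$ is in $H^{+}$, the same argument with $I=[m]\setminus\{i\}$ puts all other $A_j$ in $\partial H^{+}$; then applying the symmetric version of the observation to $I'=[m]\setminus\{j\}$ for some $j\neq i$ (whose convex hull now lies in $\overline{H^{+}}$) forces $A_i\in\partial H^{+}$, contradicting $A_i\in H^{+}$. Hence condition (ii) holds, so $V$ is the vertex set of $P=\mathrm{conv}V$.

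With $V$ identified as the vertex set of $P$, a standard supporting-hyperplane argument matches faces of $V$ in the paper's sense with proper faces of $P$ in the polytopal sense. For any facet $F$ of $P$, its vertex set is $V(J)$ for some $J\subset[m]$, which is a face of $V$, so $|J|\leq m-d-1$ by hypothesis; since $\dim F=m-d-2$ requires $|J|\geq m-d-1$, we conclude $|J|=m-d-1$ and $F$ is a simplex. Because every proper face of $P$ is contained in some facet and faces of simplices are simplices, $\partial P$ is simplicial.

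The main obstacle is the halfspace analysis in the second paragraph: distilling the relative-interior hypothesis into the rigid conclusion that certain affine hulls of Gale transform points must lie inside the bounding hyperplane $\partial H^{+}$, and then carefully handling the two subcases (zero versus exactly one $A_i$ in $H^{+}$) so that the column rank of $A_V$ and the per-index relative-interior conditions combine to yield contradictions in both.
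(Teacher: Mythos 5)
Your proof is correct and follows essentially the same route as the paper's: translate both hypotheses through Proposition \ref{Funda.Gale.} and verify condition (ii) for each open halfspace, the only difference being that the paper produces the two points $A_i,A_j\in H^{+}$ from the admissibility of $A_V$ and the fact that $\bm{0}$ has a neighborhood inside $\mathrm{conv}A_V$, whereas you reach the same conclusion by contradiction from the per-index relative-interior conditions --- both work. You also spell out the final dimension count showing each facet is a simplex, a step the paper leaves implicit.
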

\begin{proof}
Let $A_V=(A_i)_{i=1}^m$ be the Gale transform of $V$. 
It suffices to show either 
(i) or (ii) in Proposition \ref{Funda.Gale.} holds. Note that 
the case $d=0$ is trivial: this happens if and only if 
$V$ spans an $(m-1)$-simplex in $\mathbb{R}^{m-1}$.

Now suppose that $d>0$. Notice that by Proposition \ref{Funda.Gale.},
the Siegel and weak hyperbolicity conditions hold for $A_V$,
with the assumption above.
Moreover, since every vector in $V$ is a 
face, we have $\bm{0}\in\mathrm{conv}A_V(J)$, for all 
$J$ with $\mathrm{card}(J)=m-1$. 

Let $H^+$ be an open 
halfspace of $\mathbb{R}^{d}$ with $\bm{0}$ on the boundary. 
From its admissibility, $A_V$ has rank $d$,
with a neighborhood of $\bm{0}\in\mathbb{R}^d$ contained in 
$\mathrm{conv}A_V$ (see Lemma \ref{adm.lem.}), 
hence there exists $A_i\in A_V$, such that $A_i\in H^{+}$. 
Observe that now 
$\bm{0}\in\mathrm{conv}A_V([m]\setminus\{i\})$ with 
$A_V([m]\setminus\{i\})$ again admissible, by the same argument, 
there exists another $A_j\in A_V$ with $A_j\in H^{+}$, 
which means (ii) holds hence 
the statement follows.
\end{proof}
\begin{prop}\label{Lem.Of.Basis.}
Let $K_A$ be the simplicial complex induced from
an admissible $m$-tuple $A=(A_i)_{i=1}^m$ centered at the origin, 
with vectors in $\mathbb{R}^d$.
Let the tuple $V=(V_i)_{i=1}^m$ 
be the transpose of a basis of the system
\begin{equation}
  \begin{cases}
	\sum_{i=1}^mA_ix_i=\bm{0},\\
	\sum_{i=1}^mx_i=0,
  \end{cases}\label{Gale.Trans.Inv.}
\end{equation}
then $\{V_i\}_{\{i\}\in K_A}$ is the vertex set of a convex polytope $P_A$
of affine dimension $m-d-1$, with each $V_j$ in its interior, 
where $\{j\}\not\in K_A$. Moreover, the boundary of $P_A$ is isomorphic 
to $K_A$ and we can assume that $P_A$ contains $\bm{0}$ in its interior.
\end{prop}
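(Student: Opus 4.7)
The plan is to recognize $A$ as a representative of the Gale transform of $V$ and then extract the polytopal structure of $\mathrm{conv}\,V$ from Proposition \ref{Funda.Gale.}, using admissibility of $A$ to bridge between $K_A$ and the relative-interior conditions that appear there.

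First I would verify the setup. Each row of $V$ lies in $\{x\in\mathbb{R}^m : \sum_i x_i = 0\}$ by construction, so the columns satisfy $\sum_{i=1}^m V_i = \bm{0}$; moreover $V$ has rank $m-d-1$ as a matrix, so its columns linearly, and hence (by the centering) affinely, span $\mathbb{R}^{m-d-1}$. A dimension count using $AV^{\mathrm{T}} = \bm{0}$ and the centering $\sum_i A_i = \bm{0}$ then shows that the rows of $A$ form a basis for the solutions of $\sum_i V_i x_i = \bm{0}$, $\sum_i x_i = 0$, so $A$ coincides with the Gale transform $A_V$ up to linear equivalence.

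Next I would apply Proposition \ref{Funda.Gale.} with $[m]\setminus I = \{i\}$ to identify $V_i$ as a vertex of $P_A := \mathrm{conv}\,V$ exactly when $\bm{0}$ lies in the relative interior of $\mathrm{conv}\,A([m]\setminus\{i\})$. The key bridge to the definition \eqref{Def.KA.} of $K_A$ is that for admissible $A$ the conditions $\bm{0}\in\mathrm{conv}\,A(I)$ and $\bm{0}\in\mathrm{relint}\,\mathrm{conv}\,A(I)$ coincide: otherwise $\bm{0}$ would lie on a proper face $\mathrm{conv}\,A(I')$ of strictly smaller affine dimension, contradicting Lemma \ref{adm.lem.}, which forces $\widetilde{A}(I')$ to have rank $d+1$ and hence $A(I')$ to have affine dimension $d$. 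This identifies the vertex set of $P_A$ with $\{V_i : \{i\}\in K_A\}$.

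For the remaining assertions, if $V_j$ with $\{j\}\notin K_A$ lay on $\partial P_A$, a supporting hyperplane at $V_j$ would produce a proper face $\mathrm{conv}\,V(I_H)$ with $j\in I_H$, which via Proposition \ref{Funda.Gale.} forces $I_H\in K_A$ and hence $\{j\}\in K_A$ by simpliciality of $K_A$, a contradiction; so $V_j\in\mathrm{int}\,P_A$. Every proper face of $P_A$ has the form $\mathrm{conv}\,V(J)$ with $J\in K_A$, and $V(J)$ is affinely independent: any $c$ supported on $J$ with $Vc=\bm{0}$ and $\sum_i c_i = 0$ would be a linear combination of the rows of $A$ vanishing on $[m]\setminus J$, which is ruled out by $\mathrm{rank}\,A([m]\setminus J) = d$ (again via Lemma \ref{adm.lem.}). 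Thus $\partial P_A$ is a geometric simplicial complex isomorphic to $K_A$. Finally, $\bm{0} = (1/m)\sum_i V_i$ is a strictly positive convex combination of affinely spanning points, so $\bm{0}\in\mathrm{int}\,P_A$ automatically. The subtlest point in the whole plan is the relint-versus-conv equivalence, and that is exactly where admissibility, through Lemma \ref{adm.lem.}, does the essential work.
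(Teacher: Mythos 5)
Your reduction to Proposition \ref{Funda.Gale.} is sound and in places cleaner than the paper's argument: you prove directly that for admissible $A$ the conditions $\bm{0}\in\mathrm{conv}A(I)$ and $\bm{0}\in\mathrm{relint}\,\mathrm{conv}A(I)$ coincide (via Lemma \ref{adm.lem.} and the dimension drop on a proper face of $\mathrm{conv}A(I)$), which lets you bypass Lemma \ref{key.lem.} entirely, and you observe that $\bm{0}=\frac{1}{m}\sum_{i}V_i$ is a strictly positive convex combination of an affinely spanning set, so no translation is needed for the last assertion. The identification of $A$ with a Gale transform of $V$ (using the centering to see that the rows of $A$ span $\mathrm{Ker}\,\widetilde{V}$), the vertex identification, the interior-point argument for $\{j\}\notin K_A$, and the affine independence of $V(J)$ for $J\in K_A$ are all correct.

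There is, however, one genuine gap, at ``Thus $\partial P_A$ is a geometric simplicial complex isomorphic to $K_A$.'' What precedes it establishes only one inclusion of the face correspondence: every proper face of $P_A$ equals $\mathrm{conv}V(J)$ for some $J\in K_A$ and is a simplex. That shows the boundary complex of $P_A$ embeds as a subcomplex of $K_A$; it does not show that every $\sigma\in K_A$ is realized as a face of $P_A$, which the claimed isomorphism requires (a priori $K_A$ could contain simplices, even maximal ones, not appearing in $\partial P_A$). The missing implication is that a face of $V$ in the configuration sense of Proposition \ref{Funda.Gale.}, i.e. $\mathrm{conv}V([m]\setminus\sigma)\cap\mathrm{aff}\,V(\sigma)=\emptyset$, is actually a face of the polytope $\mathrm{conv}V$; you use the opposite direction (supporting hyperplane gives a configuration face) but never this one. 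It can be supplied by a separation argument: the compact convex set $\mathrm{conv}V([m]\setminus\sigma)$ and the disjoint affine subspace $\mathrm{aff}\,V(\sigma)$ can be separated by a hyperplane $H$ containing $\mathrm{aff}\,V(\sigma)$ with $V([m]\setminus\sigma)$ strictly on one side, whence $H$ supports $P_A$ and $P_A\cap H=\mathrm{conv}V(\sigma)$. (The paper sidesteps this by routing through Lemma \ref{key.lem.} and criterion (ii) of Proposition \ref{Funda.Gale.}, so that $\{V_i\}_{\{i\}\in K_A}$ is first recognized as the vertex set of a polytope, for which configuration faces and polytope faces coincide by the quoted fact from Gr\"unbaum.) With this one step added, your argument is complete.
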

\begin{proof}
First from Lemma \ref{adm.lem.}, the affine dimension of $V$ 
is $m-d-1$.

Since the centroid of $A$ is $\bm{0}$, 
now $A$ is the Gale transform of $V$,  
with the subtuple $(V_i)_{\{i\}\in K_A}$ satisfying
the assumptions in Lemma \ref{key.lem.}, thus it coincides
with the vertex set of a convex polytope $P_A$, whose boundary is simplicial. 
For those $\{j\}\not\in K_A$, if $V_j$ lies outside, or 
on the boundary of $P_A$, it must be in a face of $V$
that is contained in a supporting hyperplane of $P_A$; by 
Proposition \ref{Funda.Gale.}, this is impossible.

The last statement is also a consequence of Proposition \ref{Funda.Gale.},
together with the observation that any translation of the form 
$V+v_0=(V_i+v_0)_{i=1}^m$ also satisfies \eqref{Gale.Trans.Inv.}. 
\end{proof}
        
\begin{exm}
Let $A$ be the $5$-tuple given by the matrix
\[
  \begin{pmatrix}
	0 & 0 & 1 & 1 & -2\\
	1 & 1/2 & 0 & 0 & -3/2
  \end{pmatrix}
\]
which is admissible and centered at the origin. 
By solving \eqref{Gale.Trans.Inv.} 
we can choose $V$ that is given by
\[
  \begin{pmatrix}
	0 & 0 & -1 & 1 & 0\\
	6 & -9 & 2 & 0 & 1
  \end{pmatrix}.
\]
Observe that the last point $(0,1)^{\mathrm{T}}$ 
is in the interior of the square spanned by the other four 
vertices.
\end{exm}
\begin{remark}\label{rem:counter}
 Note that Proposition \ref{Lem.Of.Basis.} is independent
 of the choice of $V$.  
  If the centroid of $A$ is not at the origin, 
  Proposition \ref{Lem.Of.Basis.} may not hold. Consider the 
 case that $A$ is given by the matrix 
 (one can check its admissibility)
\[
  \begin{pmatrix}
	1 & 1  & 4 & -2 \\
	4 & -2 & 1 & 1
  \end{pmatrix},
\]
then we choose $V=(-1,-1,1,1)$ by \eqref{Gale.Trans.Inv.}, 
but now points $V_2=(-1)$ and $V_4=(1)$ are not contained
in the interior of $P_A=\mathrm{conv}(V_1,V_3)$.
This is because the Gale transform of $V$ can be
\[
  \begin{pmatrix}
 	0 & 0  & 1 & -1 \\
 	1 & -1 & 0 & 0
   \end{pmatrix},
\]
which is no longer admissible. 
\end{remark}
The following proposition is essentially due to Panov 
and Ustinovsky (cf.~\cite{PU12}).
\begin{prop}\label{Panov.Ustinovsky.}
Let $A=(A_i)_{i=1}^m$ be an admissible $m$-tuple centered at the origin.
Then for each $z\in\mathcal{S}_{A}$ given, there is 
a unique pair $(r,T)\in\mathbb{R}_{>0}\times\mathbb{R}^d$ 
such that 
$\Phi_{A}(z,T,r)=r(ze^{\langle A_i,T\rangle})_{i=1}^{m}\in (D^2,S^1)^{K_A}$
(see \eqref{M.A.C.} for definition).
\end{prop}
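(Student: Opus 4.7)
The plan is to recast the problem as the minimization of a convex piecewise-linear function on $\mathbb{R}^d$. For $z \in \mathcal{S}_A$ with support $I_z = \{i : z_i \neq 0\}$, consider
\[
 h(T) = \max_{i \in I_z}\bigl(\log|z_i| + \langle A_i, T\rangle\bigr),
\]
whose subdifferential is $\partial h(T) = \mathrm{conv}\, A(I(T))$, where $I(T)\subseteq I_z$ denotes the set of indices achieving the maximum. The key translation is: if $T^{*}$ minimizes $h$, then $r := e^{-h(T^{*})}$ produces the point $w := r(z_i e^{\langle A_i, T^{*}\rangle})_{i=1}^m$ with $|w_i| \le 1$ for all $i$ and $|w_i| = 1$ exactly when $i \in I(T^{*})$; the first-order optimality condition $\bm{0} \in \partial h(T^{*})$ then reads $\bm{0} \in \mathrm{conv}\, A(I(T^{*}))$, which by the definition \eqref{Def.KA.} of $K_A$ is precisely $[m]\setminus I(T^{*}) \in K_A$, i.e.\ $w \in (D^2, S^1)^{K_A}$. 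Conversely, every point of the orbit of $z$ that lies in $(D^2,S^1)^{K_A}$ arises this way, so the proposition reduces to showing that $h$ admits a unique minimizer.

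For existence I would first upgrade $\bm{0} \in \mathrm{conv}\, A(I_z)$ (which holds because $z\in\mathcal{S}_A$) to $\bm{0}\in\mathrm{int}\,\mathrm{conv}\, A(I_z)$: if $\bm{0}$ sat on the boundary, a supporting hyperplane through $\bm{0}$ would isolate a subset $J \subseteq I_z$ with $\bm{0} \in \mathrm{conv}\, A(J)$ and $A(J)$ confined to that hyperplane, forcing $\mathrm{rank}\,\widetilde{A}(J) \le d$ and contradicting Lemma~\ref{adm.lem.}. Interiority translates directly into $\max_{i \in I_z}\langle A_i, u\rangle > 0$ for every $u\neq\bm{0}$, so $h(T)\to\infty$ as $\lVert T\rVert_2\to\infty$ and the minimum is attained. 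For uniqueness, suppose $T_1\neq T_2$ are two minimizers. Convexity forces $h$ to be constant on $[T_1,T_2]$, and a first-order analysis along $u := T_2 - T_1$ at $T_1$ yields $\langle A_i, u\rangle \le 0$ for every $i \in I(T_1)$. Writing $\bm{0}$ as a convex combination of vectors in $A(I(T_1))$ then isolates a non-empty subset $J\subseteq I(T_1)$ with $\bm{0} \in \mathrm{conv}\, A(J)$ on which $\langle A_i, u\rangle = 0$, so $A(J) \subset u^{\perp}$; this violates Lemma~\ref{adm.lem.} just as above. Once $T^{*}$ is pinned down, $r = e^{-h(T^{*})}$ is forced.

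The principal obstacle is that $h$ is only piecewise linear, so the strict-convexity shortcut exploited in the smooth $L^p$ setting (see Proposition~\ref{exi.uni.}) is unavailable; both the coercivity and the rigidity of the minimizer must instead be extracted from the combinatorial rank constraint encoded in Lemma~\ref{adm.lem.}. This is essentially the route taken by Panov and Ustinovsky in \cite{PU12}, and in the present framework it seems the shortest path to the statement.
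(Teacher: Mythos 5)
Your proof is correct, and it takes a genuinely different route from the paper's. The paper (following Panov and Panov--Ustinovsky) works on the Gale-dual side: it uses the centering hypothesis to produce, via Proposition~\ref{Lem.Of.Basis.}, a simplicial polytope $\overline{P}_A$ containing $\bm{0}$ in its interior with boundary isomorphic to $K_A$, rewrites the membership condition as a linear equation in a pair $(u,c)$ with $u\in(\mathbb{R}_{\leq 0},0)^{K_A}$, and solves it by the radial decomposition of $\mathbb{R}^{m-d-1}$ determined by $\overline{P}_A$, with a separate link/star analysis for points $z$ having zero coordinates. You instead minimize the convex piecewise-linear function $h$ directly in the $T$-variable. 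Your dictionary between minimizers of $h$ and points of the extended orbit lying in $(D^2,S^1)^{K_A}$ is exact (for the converse one should note that $[m]\notin K_A$, so any point of the moment-angle complex has some coordinate of modulus exactly $1$, which pins down $r=e^{-h(T)}$ and identifies $I(T)$ with the set of unit-modulus coordinates), and both the coercivity of $h$ and the rigidity of its minimizer are extracted from Lemma~\ref{adm.lem.} by the same complementary-slackness/rank computation. Your approach buys two things: the indices with $z_i=0$ are simply omitted from $I_z$, so no separate case analysis is needed; and, more strikingly, the hypothesis $\sum_{i=1}^m A_i=\bm{0}$ is never invoked, so your argument establishes the proposition for an arbitrary admissible tuple. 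That stronger conclusion is consistent with the known identification of the link with the moment-angle complex over the associated simple polytope in \cite{BM06}, but since it goes beyond the stated result you should flag it explicitly rather than leave the unused hypothesis standing.
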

\begin{proof}
The proof presented here is adapted from 
\cite[Theorem 9.2, pp.~37--40]{Pan13}. Observe that 
in the trivial case when $d=0$, i.e.~$K_A$ is a simplex, 
we can simply take 
$r=\lVert z\rVert_{\infty}^{-1}$. 
In what follows suppose $d>0$.

Let $\mathbb{R}_{\geq 0}$ (resp.~$\mathbb{R}_{\leq 0}$) 
be the set of non-negative (resp.~non-positive) real 
numbers. Note that it suffices to prove the 
cases when $z\in(\mathbb{R}_{\geq 0})^{m}$, since for each 
$z=(z_i)_{i=1}^m\in\mathbb{C}^{m}$, 
there is a rotation  
$e^{\sqrt{-1}\theta}=(e^{\sqrt{-1}\theta_{i}})_{i=1}^{m}\in(S^1)^m$,
such that $e^{\sqrt{-1}\theta}z=(e^{\sqrt{-1}\theta_i}z_i)_{i=1}^m
\in(\mathbb{R}_{\geq 0})^{m}$, and we have
\[
  e^{\sqrt{-1}\theta}\Phi_{A}(z,T,r)=
     \Phi_{A}(e^{\sqrt{-1}\theta}z,T,r).
\]

For the tuple $A=(A_i)_{i=1}^{m}$, let 
$V=(V_i)_{i=1}^m$ be the tuple defined
in Proposition \ref{Lem.Of.Basis.}, which satisfies \eqref{Gale.Trans.Inv.}. 
Since $A$ is centered at the origin, the row vectors 
of $\widetilde{V}=(\widetilde{V}_i)_{i=1}^m$ with 
$\widetilde{V}_i=(V_i^\mathrm{T},1)^\mathrm{T}$ is a basis of 
the orthogonal complement of the space spanned by the row vectors
of $A$.

Let $\alpha$ be the linear morphism 
\begin{equation*}
  \begindc{\commdiag}[15]
  \obj(0,1)[aa]{$\alpha\co\mathbb{R}^{m}$}
  \obj(7,1)[bb]{$\mathbb{R}^{m-d}$}
  \obj(0,0)[cc]{$(x_i)_{i=1}^m$}
  \obj(7,0)[dd]{$\sum_{i=1}^m\widetilde{V}_ix_i$.}
  \mor{aa}{bb}{}
  \mor{cc}{dd}{}[+1,6]
  \enddc
\end{equation*}
For $x=(x_i)_{i=1}^{m}\in(\mathbb{R}_{\geq 0})^{m}$, we shall 
abbreviate $(\ln(x_i))_{i=1}^m$ as $\ln(x)$ in what follows. 

First we consider the case 
$z\in\mathcal{S}_{A}\cap (\mathbb{R}_{>0})^{m}$. 
Observe that there exists 
a pair $(r,T)\in\mathbb{R}_{>0}\times\mathbb{R}^d$ such that 
$y=(y_i)_{i=1}^m=\Phi_{A}(z,T,r)$ when and only when 
$\ln(y)-w-\ln(z)=(\langle A_i,T\rangle)_{i=1}^m$,
where $w=(w_i)_{i=1}^m$ with $w_i\equiv \ln(r)$, 
and this happens
if and only if the vector $\ln(y)-w-\ln(z)$ belongs to  
$\mathrm{Ker}(\alpha)$.

Let $(\mathbb{R}_{\leq 0},0)^{K_A}$ be the following polyhedral product
\begin{equation}
 (\mathbb{R}_{\leq 0},0)^{K_A}=\bigcup_{\sigma\in K_A}D(\sigma); 
	 \quad D(\sigma)=\prod_{i=1}^mY_i, 
  \  Y_i=\begin{cases}\mathbb{R}_{\leq 0} & \text{ if $i\in \sigma$,}\\
	\{0\}  &  \text{otherwise,}
  \end{cases}\label{poly.prod.2.}
\end{equation}
and it is clear that $y\in (D^2,S^1)^{K_A}\cap(\mathbb{R}_{>0})^{m}$ 
if and only if $\ln(y)\in(\mathbb{R}_{\leq 0},0)^{K_A}$, 
hence now it suffices to find a unique pair $(u,c)\in
\left( (\mathbb{R}_{\leq 0},0)^{K_A},\mathbb{R} \right)$,
such that
\begin{equation}
  \sum_{i=1}^m({V}_i^{\mathrm{T}},1)^{\mathrm{T}}(u_i+c)=\sum_{i=1}^m
  ({V}_i^{\mathrm{T}},1)^{\mathrm{T}}\ln(z_i)
  \label{Fate.}
\end{equation}
holds, where $u=(u_i)_{i=1}^m$. 
Let $\overline{P}_{A}$ be the convex polytope spanned by $\{-V_i\}_{\{i\}\in K_A}$. 
By Proposition \ref{Lem.Of.Basis.}, $\overline{P}_{A}$ contains a neighborhood 
of $\bm{0}$ in its interior, and the 
boundary of $\overline{P}_{A}$ is the union 
$-\bigcup_{\sigma\in K_A}\mathrm{conv}V(\sigma)$, 
which is simplicially isomorphic to $K_A$.
Therefore every vector $\nu$ in $\mathbb{R}^{m-d-1}$
has a unique expression $\rho\nu_0$, where 
$\rho\in\mathbb{R}_{\geq 0}$
and $\nu_0$ lies in the relative 
interior of the corresponding face. 
Together with the observation 
$\sum_{i=1}^{m}V_i=\bm{0}$ (see \eqref{Gale.Trans.Inv.}), 
we conclude that there exists a pair 
$(u,c)\in\left( (\mathbb{R}_{\leq 0},0)^{K_A},\mathbb{R} \right)$
such that 
\[ 
  \sum_{i=1}^{m}V_iu_i=\sum_{\{i\}\in K_A}V_iu_i=\sum_{i=1}^mV_i\ln(z_i),
  \quad \sum_{i=1}^{m}\left( \ln(z_i)-u_i \right)=\sum_{i=1}^mc=mc,
\] 
namely \eqref{Fate.} holds, which is unique by the construction. 

Next we consider general cases when
$z\in\mathcal{S}_{A}\cap\mathbb{R}^m_{\geq 0}$ 
with $\bar{I}_z=\{i\mid z_i=0\}$ not empty. 
First note that by definition, 
$\bar{I}_z$ is a simplex of $K_A$.
Let $\pi_z\co\mathbb{R}^{m-d-1}\to\mathbb{R}^{m-d-1-\mathrm{card}(\bar{I}_z)}$ 
be the orthogonal projection onto the linear subspace 
\[
  \bigcap_{i\in \bar{I}_z}
  \{\nu\in\mathbb{R}^{m-d-1}\mid\langle \nu, V_{i}\rangle=0\},
\]
and denote by $\mathrm{Link}(\bar{I}_z,K_A)$ the union
\[\{\sigma\in K_A\mid (\sigma\cup \bar{I}_z)\in K_A,\  
\sigma\cap \bar{I}_z=\emptyset\},\]
which is a subcomplex of 
$\mathrm{Star}(\bar{I}_z,K_A)=\{\sigma\in K_A\mid \bar{I}_z\subset \sigma\}$.
It is not difficult to see that in the image of $\pi_z$,
$\pi_z(\mathrm{conv}V(\mathrm{Star}(\bar{I}_z,K_A))$ is 
a convex polytope bounded by 
$\pi_z(\mathrm{conv}V(\mathrm{Link}(\bar{I}_z,K_A))$ 
(for example, by induction on $\mathrm{card}(\bar{I}_z)$). 
Then by a similar argument as in the previous case, 
we deduce that there exists a unique
$u=(u_i)_{i=1}^m$ in the polyhedral product 
$(\mathbb{R}_{\leq 0},0)^{\mathrm{Link}(\bar{I}_z,K_A)}$ (defined
by replacing $K_A$ with $\mathrm{Link}(\bar{I}_z,K_A)$ in \eqref{poly.prod.2.}), 
such that 
\[
  \pi_z(\sum_{i=1}^mV_iu_i)=
  \pi_z(\sum_{i=1}^mV_i\chi(z_i)\ln(z_i)); 
  \ \ \chi(z_i)\ln(z_i)=\begin{cases}
	\ln(z_i)  &\text{if $|z_i|>0$,} \\
	0       & \text{otherwise};
  \end{cases}
\]
note that vectors of $\{V_i\}_{i\in\bar{I}_z}$ are
linearly independent, hence we have a unique 
$x=(x_i)_{i=1}^m\in\mathbb{R}^m$ 
with $I_{x}\subset \bar{I}_z$, 
such that 
\[
	\sum_{i=1}^m{V}_i(u_i+x_i)=
	\sum_{i=1}^m{V}_i\chi(z_i)\ln(z_i)
\]
holds. With $c$ obtained from
\[\sum_{i=1}^m(\chi(z_i)\ln(z_i)-u_i-x_i)=mc,\]
we have 
\[
 \sum_{i=1}^m({V}_i^{\mathrm{T}},1)^{\mathrm{T}}(u_i+x_i+c)=
 \sum_{i=1}^m({V}_i^{\mathrm{T}},1)^{\mathrm{T}}\chi(z_i)\ln(z_i).
\] 

At last, by solving 
$T\in\mathbb{R}^d$ from 
\[\langle A_i, T\rangle=\chi(z_i)\ln(z_i)-u_i-x_i-c\] 
for $i=1,2,\dots,m$, and setting $r=e^{c}$, we have 
$\Phi_{A}(z,T,r)\in (D^2,S^1)^{K_A}$ as desired; 
the uniqueness follows from the arguments above and 
the observation that the rank of $A$ is $d$.
\end{proof}

From Proposition \ref{Panov.Ustinovsky.},
we can define a map
$f_{\infty}\co\mathcal{S}_A\to \mathcal{S}_A$,
with $f_{\infty}(z)$ the point in the leaf $L_z$ such that  
$f_{\infty}(z)/\lVert f_{\infty}(z) \rVert_{\infty}\in (D^2,S^1)^{K_A}$. 

The proofs of the following Proposition \ref{conti.sec.} 
and Theorem \ref{final.2.} are 
similar to the ones for Proposition \ref{conti.} and 
Theorem \ref{final.1.}, respectively,
which we shall omit here. 
\begin{prop}\label{conti.sec.}
With the assumption that $A$ is admissible and centered at the origin,
the restriction $f_2/\lVert f_2\rVert_2\big|_{(D^2,S^1)^{K_A}}
\co(D^2,S^1)^{K_A}\to X_A(2)$ 
is a homeomorphism, whose 
inverse is the restriction 
$f_{\infty}/\lVert f_{\infty}\rVert_\infty\big|_{X_A(2)}
\co X_A(2)\to(D^2,S^1)^{K_A}$.
\end{prop}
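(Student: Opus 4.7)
The plan is to follow the proof of Proposition~\ref{conti.} almost verbatim, with the moment-angle complex $(D^2,S^1)^{K_A}$ playing the role previously assigned to $X_A(p)$. The key geometric input is that every extended leaf (an orbit of $F$ together with all positive scalings of its points) meets $(D^2,S^1)^{K_A}$ in exactly one point; this is precisely Proposition~\ref{Panov.Ustinovsky.}, which is where the hypothesis that $A$ is centered at the origin enters. Combined with the analogous uniqueness for $X_A(2)$ coming from Theorem~\ref{funda.thm.}, this promises a canonical bijection between the two spaces, which should coincide with the restriction of $f_2/\lVert f_2\rVert_2$.

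First I would verify bijectivity. Given $z \in (D^2,S^1)^{K_A} \subset \mathcal{S}_A$, by Theorem~\ref{funda.thm.} the extended leaf through $z$ meets $X_A(2)$ in the single point $f_2(z)/\lVert f_2(z)\rVert_2$, so the restriction of $f_2/\lVert f_2\rVert_2$ to $(D^2,S^1)^{K_A}$ is well defined into $X_A(2)$. Conversely, for any $z' \in X_A(2) \subset \mathcal{S}_A$, Proposition~\ref{Panov.Ustinovsky.} produces a unique pair $(T,r) \in \mathbb{R}^d \times \mathbb{R}_{>0}$ with $\Phi_A(z',T,r) \in (D^2,S^1)^{K_A}$, and this target point is exactly $f_\infty(z')/\lVert f_\infty(z')\rVert_\infty$. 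Since $z$ and $f_2(z)/\lVert f_2(z)\rVert_2$ lie on the same extended leaf, and likewise $z'$ and $f_\infty(z')/\lVert f_\infty(z')\rVert_\infty$, the two assignments compose to the identity in either order and are mutually inverse bijections.

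The topological upgrade is then routine. The source $(D^2,S^1)^{K_A}$ is a finite union of products of compact discs and circles, hence compact; the target $X_A(2) \subset \mathbb{C}^m$ is Hausdorff; and $f_2/\lVert f_2\rVert_2$ is smooth on all of $\mathcal{S}_A$ by Theorem~\ref{funda.thm.}. A continuous bijection from a compact space to a Hausdorff space is closed and hence a homeomorphism, which yields the first claim. The inverse is then forced to agree with the map $f_\infty/\lVert f_\infty\rVert_\infty|_{X_A(2)}$ identified above. The only genuinely nontrivial ingredient in this entire argument is Proposition~\ref{Panov.Ustinovsky.}; once that is in hand the present proposition is a formal consequence, parallel to Proposition~\ref{conti.}.
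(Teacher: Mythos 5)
Your argument is correct and is essentially the proof the paper has in mind: the paper explicitly omits the proof of this proposition, stating it is parallel to that of Proposition~\ref{conti.}, and your write-up carries out exactly that parallel — Proposition~\ref{Panov.Ustinovsky.} supplies the unique intersection of each extended leaf with $(D^2,S^1)^{K_A}$ in place of Proposition~\ref{exi.uni.} for $X_A(p)$, and the compact-to-Hausdorff argument upgrades the bijection to a homeomorphism. No gaps.
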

\begin{thm}\label{final.2.}
The continuous function
\begin{equation*}
 \begindc{\commdiag}[15]
 \obj(0,1)[aa]{$\Phi_A(\infty)\co (D^2,S^1)^{K_A}
 \times\mathbb{R}^d\times\mathbb{R}_{>0}$}
 \obj(8,1)[bb]{$\mathcal{S}_A$}
 \obj(0,0)[cc]{$(z,T,r)$}
 \obj(8,0)[dd]{$r(z_ie^{\langle A_i,T \rangle})_{i=1}^m$}
 \mor{aa}{bb}{}
 \mor{cc}{dd}{}[+1,6]
 \enddc
\end{equation*}
is a homeomorphism, provided that 
$A$ is admissible and centered at the origin.
\end{thm}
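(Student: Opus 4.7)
The plan is to mimic the proof of Theorem \ref{final.1.} almost verbatim, with $X_A(p)$ replaced by $(D^2,S^1)^{K_A}$ and the renormalized $p$-minimum $f_p/\lVert f_p\rVert_p$ replaced by $f_\infty/\lVert f_\infty\rVert_\infty$. Continuity of $\Phi_A(\infty)$ is clear from its explicit formula, so the whole content of the theorem is the construction of a continuous inverse. Bijectivity is exactly what Proposition \ref{Panov.Ustinovsky.} supplies: every orbit of the $(\mathbb{R}^d\times\mathbb{R}_{>0})$-action on $\mathcal{S}_A$ meets $(D^2,S^1)^{K_A}$ in a unique point, and the freeness of that action (inherent in the global diffeomorphism $\Phi_A(2)$ of Theorem \ref{funda.thm.}) rules out any further coincidence.

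For the inverse I would reuse the coordinate-change trick of Theorem \ref{final.1.}. Let $\psi\co X_A(2)\to (D^2,S^1)^{K_A}$ denote the continuous inverse of $f_2/\lVert f_2\rVert_2\big|_{(D^2,S^1)^{K_A}}$ furnished by Proposition \ref{conti.sec.}; concretely, $\psi(y)=f_\infty(y)/\lVert f_\infty(y)\rVert_\infty$ is the unique point of $(D^2,S^1)^{K_A}$ lying in the same Siegel leaf as $y$. Since $\psi(y)$ and $y$ differ by an element of $\mathbb{R}^d\times\mathbb{R}_{>0}$, unwinding the definitions of $f_2$ and $T_2$ gives
\[
y=\Phi_A\bigl(\psi(y),\,T_2(\psi(y)),\,\lVert f_2(\psi(y))\rVert_2^{-1}\bigr),
\]
and the group law $\Phi_A(\Phi_A(z,T_1,r_1),T_2,r_2)=\Phi_A(z,T_1+T_2,r_1r_2)$ then yields
\[
\Phi_A(2)(y,T,r)=\Phi_A(\infty)\bigl(\psi(y),\,T+T_2(\psi(y)),\,r\lVert f_2(\psi(y))\rVert_2^{-1}\bigr).
\]
Defining
\[
\varphi(y,T,r)=\bigl(\psi(y),\,T+T_2(\psi(y)),\,r\lVert f_2(\psi(y))\rVert_2^{-1}\bigr),
\]
one obtains a continuous map $X_A(2)\times\mathbb{R}^d\times\mathbb{R}_{>0}\to(D^2,S^1)^{K_A}\times\mathbb{R}^d\times\mathbb{R}_{>0}$, the continuity coming from that of $\psi$ together with the smoothness of $T_2$ and $f_2$ on $\mathcal{S}_A$. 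The displayed identity reads exactly $\Phi_A(\infty)^{-1}=\varphi\circ\Phi_A(2)^{-1}$, which is the desired continuous inverse.

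The only place where the argument leaves the template of Theorem \ref{final.1.} is the replacement of the uniqueness of the $L^p$-norm minimum (Proposition \ref{exi.uni.}) by the uniqueness of the intersection of a Siegel leaf with $(D^2,S^1)^{K_A}$; this is precisely Proposition \ref{Panov.Ustinovsky.}, whose combinatorial proof via Gale transforms is the main obstacle, already resolved. The only side check worth stating explicitly is that $(D^2,S^1)^{K_A}\subset\mathcal{S}_A$, so that $T_2$, $f_2$ and hence $\psi$ are meaningful on it; this is immediate from comparing the defining condition $\sigma\in K_A\Leftrightarrow\bm{0}\in\mathrm{conv}\,A([m]\setminus\sigma)$ with \eqref{Def.SA.}.
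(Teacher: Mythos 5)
Your proposal is correct and follows exactly the route the paper intends: the paper omits the proof of Theorem \ref{final.2.}, stating that it is obtained from the proof of Theorem \ref{final.1.} by the same coordinate-change argument, with the uniqueness input replaced by Proposition \ref{Panov.Ustinovsky.} and the continuity of the inverse supplied by Proposition \ref{conti.sec.} --- which is precisely what you do. (Only a wording slip: $\psi(y)$ lies in the same $\Phi_A$-orbit as $y$, i.e.\ the leaf up to scaling by $\mathbb{R}_{>0}$, not literally in the same leaf.)
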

Recall that for each $p\in [1,\infty)$, we have defined 
$T_p\co \mathcal{S}_A\to\mathbb{R}^d$ such that 
$f_p(z)=F(z,T_p(z))$ has the minimal $L^p$-norm
in each leaf $F_z$. By Theorem \ref{final.1.}, 
$T_p$ is the composition of $\Phi_A^{-1}(p)$ and 
the projection onto $\mathbb{R}^d$, and 
$f_{p}(z)/\lVert f_p(z)\rVert_{p}$ 
  is the composition of $\Phi_{A}^{-1}(p)$ and the 
projection onto $X_A(p)$. 
\begin{cor}
 Let $T_{\infty}\co\mathcal{S}_A\to\mathbb{R}^d$ be the 
composition of $\Phi_{A}^{-1}(\infty)$ and the 
projection onto $\mathbb{R}^{d}$, with $A$ admissible and 
centered at the origin. Then we have
\[\lim_{p\to\infty}T_p(z)=T_{\infty}(z), \] 
  which means 
\[ 
  \lim_{p\to\infty}f_{p}(z)/\lVert f_p(z)\rVert_{p}=
 f_{\infty}(z)/\lVert f_\infty(z)\rVert_{\infty},
\]
 with any $z\in\mathcal{S}_A$ given.
\end{cor}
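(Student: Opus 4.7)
The plan is a compactness-and-uniqueness argument: I will show that every subsequence of $\{T_p(z)\}_{p\in[1,\infty)}$ admits a further subsequence converging to $T_\infty(z)$, whence the full limit exists.

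By Lemma \ref{bound.lem.}, $\{T_p(z)\}$ is bounded in $\mathbb{R}^d$. Given any sequence $p_k\to\infty$, pass to a sub-subsequence (still denoted $p_k$) along which $T_{p_k}(z)\to T^*\in\mathbb{R}^d$. Continuity of $F$ gives $F(z,T_{p_k}(z))\to F(z,T^*)$, and combining the reverse triangle inequality $\bigl|\lVert x\rVert_{p_k}-\lVert x'\rVert_{p_k}\bigr|\leq m^{1/p_k}\lVert x-x'\rVert_\infty$ with the pointwise convergence $\lVert x'\rVert_{p_k}\to\lVert x'\rVert_\infty$ yields $\lVert F(z,T_{p_k}(z))\rVert_{p_k}\to\lVert F(z,T^*)\rVert_\infty$. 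The admissibility of $A$ forces $z$ to have at least $d+1$ nonzero coordinates, so this limit is strictly positive; hence
\[
   y_k:=f_{p_k}(z)/\lVert f_{p_k}(z)\rVert_{p_k}\;\longrightarrow\; y^*:=F(z,T^*)/\lVert F(z,T^*)\rVert_\infty,
\]
with $\lVert y^*\rVert_\infty=1$.

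The crux is to show $y^*\in(D^2,S^1)^{K_A}$, i.e., $\tau:=\{i:|y^*_i|<1\}\in K_A$. Suppose not. Since $|y^*_i|<1$ for $i\in\tau$, we have $|y_{k,i}|^{p_k}\to 0$. Because $y_k\in X_A(p_k)$ satisfies $\sum_i|y_{k,i}|^{p_k}=1$ and $\sum_i A_i|y_{k,i}|^{p_k}=\bm{0}$, splitting the sums across $\tau$ and $[m]\setminus\tau$ and passing to the limit gives $\sum_{i\notin\tau}|y_{k,i}|^{p_k}\to 1$ and $\sum_{i\notin\tau}A_i|y_{k,i}|^{p_k}\to\bm{0}$. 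Normalizing the nonnegative coefficients $|y_{k,i}|^{p_k}$ ($i\notin\tau$) into a probability vector and extracting a further convergent subsequence produces $(\mu_i)_{i\notin\tau}$ with $\mu_i\geq 0$, $\sum_{i\notin\tau}\mu_i=1$ and $\sum_{i\notin\tau}A_i\mu_i=\bm{0}$; thus $\bm{0}\in\mathrm{conv}A([m]\setminus\tau)$, which by \eqref{Def.KA.} forces $\tau\in K_A$, a contradiction.

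Having established $F(z,T^*)/\lVert F(z,T^*)\rVert_\infty\in(D^2,S^1)^{K_A}$, the uniqueness in Proposition \ref{Panov.Ustinovsky.} (equivalently, the uniqueness of the preimage under the homeomorphism $\Phi_A(\infty)$ in Theorem \ref{final.2.}) of the parameter $T$ for which $F(z,T)/\lVert F(z,T)\rVert_\infty$ lies in the moment-angle complex identifies $T^*$ with $T_\infty(z)$. Since every subsequence of $\{T_p(z)\}$ admits a further subsequence with limit $T_\infty(z)$, the whole family converges to $T_\infty(z)$ as $p\to\infty$; the statement for $f_p(z)/\lVert f_p(z)\rVert_p$ then follows from the same continuity arguments used above. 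The principal obstacle is the middle step: Proposition \ref{go.out.} alone does not suffice, since it merely excludes $y_k$ from $C(z')$ for $p$ large and does not prevent the limit $y^*$ itself from landing just outside $(D^2,S^1)^{K_A}$. The explicit convex-combination argument, which exploits the exponential suppression $|y_{k,i}|^{p_k}\to 0$ whenever $|y^*_i|<1$ and $p_k\to\infty$, is what closes this gap.
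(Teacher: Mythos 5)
Your proof is correct and follows the same overall strategy as the paper's: boundedness of $\{T_p(z)\}$ from Lemma \ref{bound.lem.}, extraction of a convergent subsequence, identification of the subsequential limit via the uniqueness in Proposition \ref{Panov.Ustinovsky.}, and the standard ``every subsequence has a further subsequence converging to the same limit'' principle. The one place you genuinely diverge is the middle step, where the paper simply cites Corollary \ref{To.The.Sphere.} and Proposition \ref{go.out.} to place the limit $y^*$ in $(D^2,S^1)^{K_A}$, whereas you prove this directly by the convex-combination argument on the coefficients $|y_{k,i}|^{p_k}$. Your criticism of the citation is fair but slightly overstated: Proposition \ref{go.out.} does suffice, provided one chooses the comparison point $z'$ with $|z_i'|=\min(|y_i^*|+\varepsilon,1)$ (so that $\{i:|z_i'|<1\}=\tau\notin K_A$ and $y_{p_k}\in C(z')$ for large $k$, using $\lVert y_{p_k}\rVert_{p_k}=1$ to bound the remaining coordinates by $1$); the paper leaves this choice implicit. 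Your direct argument is essentially a re-derivation of the content of Proposition \ref{go.out.} at the limit point, and has the merit of being self-contained and of making explicit exactly where the hypothesis $\tau\notin K_A$ is contradicted via \eqref{Def.KA.}.
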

\begin{proof}
By  Lemma \ref{bound.lem.}, Corollary \ref{To.The.Sphere.} and Proposition \ref{go.out.}, 
there exists a sequence $\{p_k\}_{k=1}^{\infty}$ such 
that $\{T_{p_k}(z)\}_{k=1}^{\infty}$ converges to a point $T_0\in\mathbb{R}^d$, 
and 
$\{f_{p_k}(z)\}_{k=1}^{\infty}$ converges to some $y_0$ 
such that $y_0/\lVert y_0\rVert_{\infty}\in(D^2,S^1)^{K_A}$. 
We claim that 
\begin{equation*}
\lim_{p\to\infty}T_p(z)=T_0=T_{\infty}(z) 
\end{equation*}
 with 
\begin{equation*}
 \lim_{p\to\infty}f_{p}(z)/\lVert f_p(z)\rVert_{p}=
y_0/\lVert y_0\rVert_{\infty}=f_{\infty}(z)/\lVert f_\infty(z)\rVert_{\infty}. 
\end{equation*}
Note that 
\[y_0/\lVert y_0\rVert_{\infty}=
\lim_{k\to\infty}\Phi_A(z,T_k(z),\lVert f_{p_k}\rVert_{p_k}^{-1})=
\Phi_A(z,T_0,\lVert y_0\rVert_{\infty}^{-1})\in (D^2,S^1)^{K_A}\]
which is uniquely determined by $z$ (see Proposition \ref{Panov.Ustinovsky.}), 
therefore
$y_0/\lVert y_0\rVert_{\infty}$ must 
be $f_{\infty}(z)/\lVert f_\infty(z)\rVert_{\infty}$ 
and $T_0$ must be $T_{\infty}(z)$. It is not difficult 
to see that the argument above is independent of the choice
of the sequence $\{p_k\}_{k=1}^{\infty}$, hence the claim holds and the 
proof is completed.
\end{proof}

\section{Some applications}\label{sec.4.}
In this section we shall revisit several known 
results from another perspective. 
First notice that by Proposition \ref{Lem.Of.Basis.}, a simplicial complex 
$K_A$ induced from an admissible tuple that is centered at the origin 
can be realized as the 
boundary of a convex polytope dual to a simple one;
the converse is also true: for a convex 
polytope with simplicial boundary, the Gale transform of its 
vertices will be a tuple with the property above.

Our first application is an alternative proof of a rigidity theorem
on polytopal moment-angle manifolds, due to Bosio and Meersseman:
\begin{prop}[cf.~{\cite[Theorem 4.1]{BM06}}]\label{rigi.}
 Let $K_A$ and $K_{A'}$ 
 be the simplicial complexes induced from two admissible $m$-tuples
 $A$ and $A'$ that are centered at the origin, respectively.
 If there is a simplicial isomorphism 
 $\phi\co K_A\to K_A'$, then there is a diffeomorphism 
 between $X_A(2)$ and $X_{A'}(2)$.
\end{prop}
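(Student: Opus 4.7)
The plan is to use Theorem \ref{mthm2.} as a bridge: both $X_A(2)$ and $X_{A'}(2)$ are already known to be homeomorphic to moment-angle complexes $(D^2,S^1)^{K_A}$ and $(D^2,S^1)^{K_{A'}}$ respectively, and any simplicial isomorphism $\phi\co K_A\to K_{A'}$ gives a tautological homeomorphism between these polyhedral products. This yields a homeomorphism $X_A(2)\to X_{A'}(2)$, and the remaining task is to upgrade it to a diffeomorphism via a family argument.

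First, since $\phi$ is induced by a bijection of vertex sets $[m]\to[m]$, the corresponding coordinate permutation of $\mathbb{C}^m$ is a linear diffeomorphism. It carries $X_{A'}(2)$ to the link of the reordered tuple, preserving admissibility, the centeredness condition \eqref{cent.orig.}, and the transversality in \eqref{def.x2.}. Replacing $A'$ by its reordering, I may assume $\phi$ is the identity on $[m]$ and hence $K_A=K_{A'}=K$ as subsets of $2^{[m]}$.

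Next, I would construct a smooth path $A\co[0,1]\to(\mathbb{R}^d)^m$ of admissible $m$-tuples, all centered at the origin, with $A(0)=A$, $A(1)=A'$, and $K_{A(t)}=K$ for every $t$. Both the centroid condition $\sum_i A(t)_i=\bm{0}$ and the ambient affine structure on $(\mathbb{R}^d)^m$ are linear in $A(t)$, so convex combinations behave well; moreover, once admissibility holds at $A(t)$, the complex $K_{A(t)}$ is locally constant in $t$. The existence of such a path can be established on the Gale-dual side: by Proposition \ref{Lem.Of.Basis.}, each admissible tuple centered at the origin corresponds (up to a natural equivalence) to a convex polytope $P_{A(t)}$ with simplicial boundary combinatorially isomorphic to $K$, and one interpolates in the space of such polytopes, which is connected under the centering constraint. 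Given the path, I would then form the total space
\[\mathcal{X}=\{(z,t)\in\mathbb{C}^m\times[0,1]\mid z\in X_{A(t)}(2)\},\]
which by the transversality clause of Theorem \ref{funda.thm.} (applied uniformly along the path) is a compact smooth manifold with boundary $X_A(2)\sqcup X_{A'}(2)$, and the projection $\pi\co\mathcal{X}\to[0,1]$ is a proper submersion. Ehresmann's fibration theorem then produces the required diffeomorphism $X_A(2)\cong X_{A'}(2)$.

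The main obstacle is step two, constructing the path $A(t)$. The naive linear interpolation is not guaranteed to work: although admissibility and the identity $K_{A(t)}=K$ are open conditions and locally constant, the weak hyperbolicity condition $*_2$ is the complement of a finite union of convex "bad" sets $\{A\mid \bm{0}\in\mathrm{conv}A(I),\ \mathrm{card}(I)\leq d\}$, and this complement need not be convex. The fix is to move the path instead in the Gale-dual picture, where the relevant obstruction becomes a combinatorial incidence that is easier to avoid while keeping the vertex/interior structure of $P_{A(t)}$ dictated by Proposition \ref{Lem.Of.Basis.} fixed. Once the path is produced, the Ehresmann step is standard because the link $X_{A(t)}(2)$ is compact for each $t$ (being a closed subset of the unit sphere) and the defining system \eqref{def.x2.} depends smoothly on the parameters $A(t)$, which gives uniform transversality and hence a proper smooth submersion $\pi$.
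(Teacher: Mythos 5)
The decisive step of your argument --- the existence of a path $A(t)$ of admissible, centered $m$-tuples with $K_{A(t)}=K$ constant joining $A$ to (the reordering of) $A'$ --- is exactly where the proposal breaks down, and the breakdown is not repairable by ``moving to the Gale-dual picture.'' Via Proposition \ref{Lem.Of.Basis.}, the set of admissible centered tuples with prescribed $K_A=K$ corresponds, continuously after fixing bases, to the space of realizations of $K$ as the boundary complex of a simplicial polytope (together with marked interior points for the non-vertices); path-connectivity of the former would force path-connectivity of the latter. But realization spaces of simplicial polytopes are in general \emph{not} connected (e.g.\ the Bokowski--Ewald--Kleinschmidt simplicial $4$-polytope with disconnected realization space, and Mn\"{e}v-type universality phenomena), so the path you need simply does not exist for all inputs; the centering constraint is only a normalization by translation and does not change this. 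Indeed, the whole content of the Bosio--Meersseman rigidity theorem is that the diffeomorphism type is constant \emph{across} different components of this configuration space, which is strictly more than any deformation-plus-Ehresmann argument can deliver. (The Ehresmann step itself, granted the path, would be fine.)

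The paper's proof avoids deformations altogether and uses precisely the bridge you mention in your first paragraph, but pointwise rather than parametrically: extend $\phi$ to a coordinate permutation $\widetilde\phi$ of $\mathbb{C}^m$, note that $\widetilde\phi$ identifies $(D^2,S^1)^{K_A}$ with $(D^2,S^1)^{K_{A'}}$, and then use Proposition \ref{conti.sec.} (equivalently Theorem \ref{mthm2.}) to see that the map $(f_2'/\lVert f_2'\rVert_2)\circ\widetilde\phi\big|_{X_A(2)}\co X_A(2)\to X_{A'}(2)$ is a bijection, with inverse of the same form obtained by exchanging the roles of $A$ and $A'$. Both maps are restrictions of smooth maps defined on the open sets $\mathcal{S}_A$ and $\mathcal{S}_{A'}$, so one obtains a diffeomorphism directly, with no need to connect $A$ to $A'$ inside the space of admissible configurations.
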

\begin{proof}
 Observe that under the assumption, $\phi$ can be extended
 as a bijection from $[m]$ to itself (possibly not unique),
 and let $\widetilde{\phi}\co \mathcal{S}_A\to\mathcal{S}_{A'}$ 
 be the diffeomorphism via permuting coordinates 
 with respect to $\phi$. Clearly $\widetilde{\phi}$ 
 gives a homeomorphism between associated moment-angle complexes 
 $(D^2,S^1)^{K_A}$ and $(D^2,S^1)^{K_{A'}}$.
 On the other hand, we have a smooth map 
 $(f_2'/\lVert f_2'\rVert_2)\circ\widetilde{\phi}\co X_A(2)\to 
 X_{A'}(2)$ given in the diagram
 \[
 \begin{CD}
 \mathcal{S}_A @>\widetilde{\phi}>\mathrm{diffeo.}>\mathcal{S}_{A'}\\
 @AAA                                      @V f_2'/\lVert f_2'\rVert_{2} VV\\
   X_A(2)@>(f_2'/\lVert f_2'\rVert_2)\circ\widetilde{\phi}>>X_{A'}(2)\\
   @Vf_{\infty}/\lVert f_{\infty}\rVert_{\infty} V\mathrm{homeo.}V      
   @A {f_2'/\lVert f_2'\rVert_2} A\mathrm{homeo.} A \\
   (D^2,S^1)^{K_A}@>\widetilde{\phi}>\mathrm{homeo.}>(D^2,S^1)^{K_{A'}},
 \end{CD}
 \]
 where $f'_2\co\mathcal{S}_{A'}\to \mathcal{S}_{A'}$ is the function 
 of $L^2$-norm minima of Siegel leaves.
 By the commutativity,  
 it follows that
 $(f_2'/\lVert f_2'\rVert_2)\circ\widetilde{\phi}$ is
 a homeomorphism (see Theorem \ref{funda.thm.} and Proposition \ref{conti.sec.}), 
 whose inverse can be constructed by interchanging
 the roles of $A$ and $A'$, which is also smooth.
\end{proof}

In what follows we shall discuss everything 
with $\mathbb{C}^m$ 
replaced by its subspace $\mathbb{R}^m$.
In the foliation $\mathcal{F}$ given by the action \eqref{Def.F.}, 
a leaf $L_z$ lies in $\mathcal{S}_A\cap\mathbb{R}^m$ 
if and only if $z\in\mathcal{S}_A\cap\mathbb{R}^{m}$. 
Therefore all properties hold true when restricted to 
the real case.

We will still use the same notations as in the previous sections, 
with the exception that the notation 
$(D^1,S^0)^{K_A}$ is used for the associated \emph{real moment-angle complex}, 
i.e., the intersection of $(D^2,S^1)^{K_A}$ with $\mathbb{R}^m$ 
(see \eqref{M.A.C.} for details). 

Notice that the real version of Proposition \ref{rigi.} holds, 
namely the $\mathbb{Z}_2^m$-equivariant (where $\mathbb{Z}_2^m$ acts on 
$X_A(2)$ by changing the signs of coordinates) 
smooth structures on $X_A(2)$
are determined by combinatorial types of $K_A$. 
This can be deduced from a result of Wiemeler 
in \cite[Corollary 5.2]{Wie13} (see also \cite[Corollary 1.3]{Dav13}).

Recall that a subspace $X$ of $\mathbb{R}^m$ is a \emph{polyhedron}, 
if for every point $x\in X$ there is a compact set $C_x$, 
such that $x*C_x=\{ax+bl\mid l\in C_x,a+b=1,a,b\geq 0\}$
is a neighborhood of $x$ in $X$.
For instance, 
$(D^1,S^0)^{K_A}$ and $X_A(1)$ are polyhedra embedded in $\mathbb{R}^m$,
hence they can be triangulated (cf.~e.g.~\cite[Theorem 2.11]{RS72}). 

A polyhedron $X$ is a piecewise linear (abbr.~$\mathrm{PL}$) 
$n$-manifold if given certain triangulation, 
the link of each vertex is $\mathrm{PL}$ homeomorphic to the 
boundary of an $n$-simplex or to an $(n-1)$-simplex (i.e.  
these homeomorphims become simplicial after suitable subdivisions
on both sides). 
Note that this property is independent of the
triangulation chosen for $X$ (cf.~e.g.~\cite[pp.~20--22]{RS72}).

\begin{defin}[Whitehead triangulation]\label{smooth.PL.}
Let $X$ be a polyhedron and $M$ a smooth manifold. A map 
$\eta\co X\to M$ is a \emph{piecewise differentiable} 
(abbr.~$\mathrm{PD}$) homeomorphism 
if there exists a triangulation of $X$,
such that the restriction of $\eta$ to each simplex is smooth with 
the Jacobian matrix non-degenerate. 
Such a $\mathrm{PD}$ homeomorphism $\eta$ is called a 
\emph{Whitehead triangulation} of $M$, 
and also a \emph{smoothing} of $X$. 
\end{defin}

Note that by Propositions \ref{conti.} and \ref{conti.sec.}, 
the smooth function 
$f_2/\lVert f_2\rVert_2\co\mathcal{S}_A\to X_A(2)$ induces
a homeomorphism when restricted to either $(D^1,S^0)^{K_A}$
or $X_A(1)$. Moreover, the following lemma holds:
\begin{lem}\label{inj.lem.}
Let $A=(A_i)_{i=1}^m$ be an admissible tuple centered at 
the origin. 
If a space $Y\subset\mathbb{R}^m$ is either
\begin{enumerate}
 \item [(a)] the intersection of the $L^p$-link $X_A(p)$ (defined
	  by \eqref{Def.X.})
	  with the first orthant of $\mathbb{R}^m$ (i.e. points 
	  with non-negative coordinates), for any  
	  $p\geq 1$, or
 \item [(b)] a component of the polyhedral product 
	  $D(\sigma)=(D^1,S^0)^{\sigma}$ 
	  (see \eqref{M.A.C.} for definition, with the pair replaced), for
	  any $\sigma\in K_A$ with maximal dimension,
\end{enumerate}
then $Y$ is a smooth manifold with corners, and 
the differential of $f_2/\lVert f_2\rVert_2$ 
at any point of $Y$ induces a linear injection 
between corresponding tangent spaces.
\end{lem}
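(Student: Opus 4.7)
The plan is to reduce both parts of the lemma to a single linear-algebra fact via the global diffeomorphism $\Phi_A(2)$ of Theorem \ref{funda.thm.}. Differentiating $\Phi_A(2)$ in the $(T,r)$-directions and restricting to real coordinates identifies the kernel of $d(f_2/\lVert f_2\rVert_2)_z$ at any $z\in\mathcal{S}_A\cap\mathbb{R}^m$ as the $(d+1)$-dimensional subspace $K_z=\mathrm{span}\{z,v^{(1)},\dots,v^{(d)}\}$, where $v^{(k)}_i=(A_i)_k z_i$. Any element of $K_z$ therefore has the form $v_i=(\lambda+\langle\mu,A_i\rangle)z_i$ for some $(\lambda,\mu)\in\mathbb{R}\times\mathbb{R}^d$, and proving the differential injects on $T_zY$ amounts to showing $K_z\cap T_zY=\{\bm{0}\}$.

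Case (b) is almost immediate. Each component of $D(\sigma)=(D^1,S^0)^{\sigma}$ is a product of intervals $[-1,1]$ at positions $i\in\sigma$ and of a fixed sign $\varepsilon_i=\pm 1$ at positions $i\notin\sigma$, so it is a smooth $(m-d-1)$-cube with corners and $T_zY=\{v\in\mathbb{R}^m:v_i=0\text{ for }i\notin\sigma\}$ at every point. A vector $v\in K_z\cap T_zY$ must then satisfy $\lambda+\langle\mu,A_i\rangle=0$ for every $i\in[m]\setminus\sigma$, since $z_i=\varepsilon_i\neq 0$ there. Maximality of $\sigma$ combined with Proposition \ref{Lem.Of.Basis.} gives $|\sigma|=m-d-1$ and hence $|[m]\setminus\sigma|=d+1$; and because $\sigma\in K_A$, Lemma \ref{adm.lem.} says the $(d+1)\times(d+1)$ matrix $\widetilde{A}([m]\setminus\sigma)$ has rank $d+1$, so it is invertible and forces $(\lambda,\mu)=(0,\bm{0})$.

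In case (a) the structure of $Y=X_A(p)\cap\mathbb{R}^m_{\geq 0}$ is obtained by a stratumwise transversality calculation: at a point $z$ with support $I_z$, the Jacobian of the defining equations $\sum A_ix_i^p=\bm{0}$, $\sum x_i^p=1$ with respect to the active coordinates $(x_i)_{i\in I_z}$ equals $p\,\widetilde{A}(I_z)\,\mathrm{diag}(x_i^{p-1})_{i\in I_z}$, which has rank $d+1$ by Lemma \ref{adm.lem.} since $x_i>0$ for $i\in I_z$. This exhibits $Y$ as a smooth manifold with corners of dimension $m-d-1$, and identifies $T_zY$ as the set of $v$ satisfying $v_i=0$ for $i\notin I_z$ together with $\sum_{i\in I_z}A_ix_i^{p-1}v_i=\bm{0}$ and $\sum_{i\in I_z}x_i^{p-1}v_i=0$. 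Substituting $v\in K_z$ and using $\sum A_ix_i^p=\bm{0}$ collapses the first constraint to $\sum_{i\in I_z}\langle\mu,A_i\rangle A_ix_i^p=\bm{0}$; pairing with $\mu$ yields $\sum_{i\in I_z}\langle\mu,A_i\rangle^2 x_i^p=0$, forcing $\langle\mu,A_i\rangle=0$ for every $i\in I_z$. By the rank-$d$ part of Lemma \ref{adm.lem.}, $\mu=\bm{0}$, and then the second constraint gives $\lambda=0$.

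The main obstacle is the failure of $x\mapsto x^p$ to be $C^\infty$ at $x=0$ for non-integer $p$, which strictly speaking threatens the smooth-manifold-with-corners structure on $Y$ in case (a) at points where some coordinate vanishes. The clean remedy is to pass to the coordinates $y_i=x_i^p$ on $\mathbb{R}^m_{\geq 0}$: the image of $Y$ becomes the convex polytope $\{y\geq\bm{0}:\sum A_iy_i=\bm{0},\ \sum y_i=1\}$, manifestly a smooth manifold with corners (its simpleness comes again from the rank $d+1$ of $\widetilde{A}$ on each support). The injectivity argument above is purely rank-theoretic, depending only on the admissibility data encoded in $\widetilde{A}(I_z)$ and $\widetilde{A}([m]\setminus\sigma)$, so it is unaffected by this change of chart and transports back to the original $x$-coordinates.
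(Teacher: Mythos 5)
Your argument is correct and follows essentially the same route as the paper: both identify the kernel of $d(f_2/\lVert f_2\rVert_2)$ at $z$ with the $(d+1)$-plane $\{((\lambda+\langle\mu,A_i\rangle)z_i)_{i=1}^m\}$ coming from the $(T,r)$-directions of $\Phi_A(2)$, and then kill its intersection with $T_zY$ by the rank-$(d+1)$ property of $\widetilde{A}(I_z)$ resp.\ $\widetilde{A}([m]\setminus\sigma)$ (your pairing-with-$(\mu,\lambda)$ computation is exactly the positivity argument the paper compresses into ``we can write each $y_i^p$ as a square''). The only caveat concerns your final paragraph, which addresses a regularity point the paper leaves implicit: the substitution $y_i=x_i^p$ does make $Y$ a convex polytope, but its inverse $y_i\mapsto y_i^{1/p}$ is not differentiable where $y_i=0$, so ``transporting back'' the smooth-with-corners structure is not automatic for non-integer $p>1$; this does not affect the injectivity computation, nor the case $p=1$ actually used in Proposition \ref{fina.pl.}.
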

\begin{proof}
First we show that each $Y$ is indeed a smooth manifold with corners,
in both cases.
For (b) this is obvious since $Y$ is a cube of dimension 
$m-d-1$. As for (a), observe 
that for each $\sigma\in K_A$ with $\mathrm{card}(\sigma)=k$, 
the augmented subtuple $\widetilde{A}([m]\setminus\sigma)$ has 
rank $d+1$, where $\widetilde{A}=(\widetilde{A}_i)_{i=1}^m$ with 
$\widetilde{A}_i=(A_i^{\mathrm{T}},1)^{\mathrm{T}}$
(see Lemma \ref{adm.lem.}), therefore the row vectors of 
$\widetilde{A}$, together with canonical basis vectors $e_i\in\mathbb{R}^m$ 
(the vector with only $i$-th coordinate non-zero, which is one) 
for all $i\in\sigma$, form a matrix of rank $d+k+1$. 
This means that the intersection
\[ Y\bigcap_{i\in\sigma}F_i \]
is transverse, where 
$F_i=\{(x_i)_{i=1}^m\in\mathbb{R}^m\mid x_i=0\}$. 

Recall that
$\Phi_{A}(2)\co X_A(2)\times\mathbb{R}^d\times\mathbb{R}_{>0}
\to\mathcal{S}_A$ is a diffeomorphism, such that 
$f_2/\lVert f_2\rVert_2\circ\Phi_{A}(2)$ is the identity on $X_A(2)$ 
(see Theorem \ref{funda.thm.}).
Let   
\[
  \mathrm{d}\Phi_{A}(2)_x\co
   \mathbb{R}^{m-d-1}\times\mathbb{R}^d\times\mathbb{R}\to\mathbb{R}^{m}\in 
   T_{\Phi_{A}(2)(x)}\mathcal{S}_A,
\]
be the differential of $\Phi_{A}(2)$ at the point $x$, and
let $\zeta$ be the linear subspace $\{\bm{0}\}\times\mathbb{R}^d
\times\mathbb{R}\subset\mathbb{R}^m$ of dimension $d+1$. 
It suffices to show that for all $y=(y_i)_{i=1}^m\in Y$ with 
$x=(x_i)_{i=1}^m=f_2(y)/\lVert f_2(y)\rVert_2$,  
the intersection of the image of $\mathrm{d}\Phi_{A}(2)_{x}|_{\zeta}$ with 
the tangent space $T_yY$ is trivial.

For (a), note that from its definition \eqref{Def.X.}, 
the tangent space $T_yY$ is the orthogonal 
complement of the ($d+1$)-space 
spanned by the row vectors of the ($(d+1)\times m$)-matrix 
\[
  \widetilde{A}_{y^{p-1}}=((A^{\mathrm{T}}_i,1)^{\mathrm{T}}
y_i^{p-1})_{i=1}^m         
\]
and the image of $\mathrm{d}\Phi_{A}(2)_{x}|_{\zeta}$ is 
spanned by the row vectors of 
$\widetilde{A}_{y}=((A^{\mathrm{T}}_i,1)^{\mathrm{T}}
y_i)_{i=1}^m $. From the previous argument,  
the subtuple $\widetilde{A}_{y^{p-1}}(I_y)$ 
has rank $d+1$ ($I_y\subset [m]$ is the set of 
non-zero entries of $y$), hence any row vector of 
$\widetilde{A}_{y}(I_y)$ cannot be orthogonal to 
the corresponding one in $\widetilde{A}_{y^{p-1}}(I_y)$, 
otherwise itself must be zero 
(since we can write each $y_i^p$ as a square).

As for (b), the tangent space at $y\in (D^1,S^0)^{\sigma}$
is spanned by $\{e_i\mid i\in\sigma\}$, where 
$\mathrm{card}(\sigma)=m-d-1$. But we have shown
that the 
row vectors of $\widetilde{A}_{y}(I_y)$ and the 
basis of $T_yY$
has a full rank $m$, therefore the 
intersection of the image of $\mathrm{d}\Phi_{A}(2)_{x}|_{\zeta}$ with 
$T_yY$ must be trivial.
\end{proof}
As a corollary, we find that with given triangulations, 
the restriction of $f_2/\lVert f_2\rVert_2$ to 
either $(D^1,S^{0})^{K_A}$ or
 $X_A(1)$ will be a Whitehead triangulation
of $X_A(2)$.
By a theorem of Whitehead (cf.~\cite{Whi40}), 
if there is a $\mathrm{PD}$ homeomorphism from a polyhedron $X$ to 
a smooth manifold $M$, then $X$ is a $\mathrm{PL}$ manifold, 
and the $\mathrm{PL}$ structure on $X$ is uniquely determined by the 
smooth structure given on $M$.
Consequently, it follows that 
$(D^1,S^{0})^{K_A}$  and $X_A(1)$ are homeomorphic as 
$\mathrm{PL}$ manifolds.

At last, we make a conclusion to end this section. 
\begin{prop}  \label{fina.pl.}
  For each simplicial complex $K_A$ 
  induced from an admissible $m$-tuple $A$ centered at the origin, 
  there is a $\mathrm{PD}$
  homeomorphism from $(D^1,S^0)^{K_A}$ onto the smooth manifold 
  $X_A(2)$, thus $(D^1,S^0)^{K_A}$ is a 
  $\mathrm{PL}$ manifold of dimension $m-d-1$. If 
  $(D^1,S^0)^{K_A}$ has an exotic $PL$ structure, 
  then either it is not smoothable, or $X_A(2)$ must have
  different smooth structures. 
\end{prop}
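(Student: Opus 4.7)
The plan is to verify that $f_2/\lVert f_2\rVert_2$ restricts to a Whitehead triangulation (Definition~\ref{smooth.PL.}) of $X_A(2)$, and then to appeal to the theorem of Whitehead \cite{Whi40} cited just above the statement. The real analogue of Proposition~\ref{conti.sec.} already identifies the restriction $f_2/\lVert f_2\rVert_2\big|_{(D^1,S^0)^{K_A}}\co(D^1,S^0)^{K_A}\to X_A(2)$ as a homeomorphism between a polyhedron and the smooth manifold $X_A(2)$ of dimension $m-d-1$, so the content is to exhibit a compatible piecewise-smooth structure with non-degenerate Jacobian.

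First I would choose a triangulation of $(D^1,S^0)^{K_A}$ refining its natural cubical decomposition into the components $D(\sigma)=(D^1,S^0)^{\sigma}$ indexed by the maximal $\sigma\in K_A$. By Lemma~\ref{inj.lem.}(b) each such $D(\sigma)$ is a smooth cube of top dimension $m-d-1$, and compatible simplicial subdivisions of these cubes (e.g.~iterated barycentric subdivision matched along common faces) yield a global triangulation of the polyhedron. Since $f_2/\lVert f_2\rVert_2$ is smooth on the open neighborhood $\mathcal{S}_A\cap\mathbb{R}^m$ of $(D^1,S^0)^{K_A}$ (Theorem~\ref{funda.thm.}), its restriction to every simplex is smooth. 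On any top-dimensional simplex, which lies inside some $D(\sigma)$, Lemma~\ref{inj.lem.}(b) furnishes a linear injection between tangent spaces of equal dimension $m-d-1$, hence an isomorphism; the Jacobian matrix is therefore non-degenerate. This establishes the PD property, and Whitehead's theorem then equips $(D^1,S^0)^{K_A}$ with a PL-manifold structure of dimension $m-d-1$, uniquely determined by the smooth structure of $X_A(2)$.

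For the final conditional assertion I would argue contrapositively: if $(D^1,S^0)^{K_A}$ admits an exotic PL structure (one inequivalent to the canonical one just produced) and is simultaneously smoothable, then a PD homeomorphism from $(D^1,S^0)^{K_A}$ onto some smooth manifold $M$ would exist; the uniqueness clause of Whitehead's theorem prevents two inequivalent PL structures on the same polyhedron from being induced by diffeomorphic smooth manifolds, so $M$ cannot be diffeomorphic to $X_A(2)$. This is exactly the dichotomy stated in the proposition. The main technical point I expect will be the coherent combinatorial choice of the cubical-to-simplicial subdivision across the corners shared by adjacent $D(\sigma)$'s, but this is standard PL combinatorics once a uniform subdivision scheme is fixed, and does not affect the non-degeneracy argument since each top simplex still lies in a single $D(\sigma)$.
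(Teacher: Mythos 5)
Your proposal is correct and follows essentially the same route as the paper: the homeomorphism from the real analogue of Proposition~\ref{conti.sec.}, non-degeneracy of the Jacobian on each top-dimensional cube $D(\sigma)$ via Lemma~\ref{inj.lem.}(b), and Whitehead's theorem together with its uniqueness clause for the final dichotomy. The only difference is that you spell out the compatible cubical-to-simplicial subdivision, which the paper leaves implicit.
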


%

\end{document}